\theoremstyle{plain}
\newtheorem{theorem}{Theorem}[section]
\newtheorem{lemma}[theorem]{Lemma}
\theoremstyle{definition}
\newtheorem{definition}[theorem]{Definition}
\newtheorem{example}[theorem]{Example}
\theoremstyle{remark}
\newtheorem{remark}{Remark}
\DeclareMathOperator*{\argmin}{arg\,min}
\begin{document}


\title{Convergence analysis of stochastic higher-order majorization-minimization algorithms}

\author{
	\name{Daniela Lupu\textsuperscript{*} and Ion Necoara\textsuperscript{*,**}\thanks{Corresponding author: I. Necoara  (email:  ion.necoara@upb.ro)}}
	\affil{\textsuperscript{*}Automatic Control and Systems Engineering Department, University Politehnica Bucharest, 060042 Bucharest, Romania;   \textsuperscript{**}Gheorghe Mihoc-Caius Iacob Institute of Mathematical Statistics and Applied Mathematics of the  Romanian Academy, 050711 Bucharest, Romania.}}

\maketitle

\begin{abstract}
	Majorization-minimization schemes are a broad class of iterative methods targeting general optimization problems, including nonconvex, nonsmooth and stochastic. These algorithms minimize successively a sequence of upper bounds of the objective function so that along the iterations the objective value decreases. We present a stochastic higher-order algorithmic framework for minimizing the average of a very large number of sufficiently smooth functions.  Our stochastic framework is based on the notion of stochastic higher-order upper bound approximations of the finite-sum objective function and minibatching. We derive convergence results for nonconvex and convex optimization problems when the higher-order  approximation of the objective function yields an error that is $p$ times differentiable and has  Lipschitz continuous $p$ derivative.  More precisely, for general nonconvex problems we present asymptotic stationary point guarantees and under Kurdyka-Lojasiewicz property  we derive local  convergence rates ranging from sublinear to linear.  For convex problems with uniformly convex objective function we derive local (super)linear convergence results for our algorithm.  Compared to existing stochastic (first-order) methods, our algorithm  adapts to the problem's curvature and allows using any batch size.  Preliminary numerical tests support the effectiveness  of our algorithmic framework.
\end{abstract}

\begin{keywords}
Finite-sum  optimization,   majorization-minimization, stochastic higher-order algorithms, minibatch, convergence rates.
\end{keywords}


\section{Introduction}
The empirical risk minimization (also called finite-sum) problems appear in various  applications such as statistics and  machine learning \cite{BatCur:18,GooBen:16} or  distributed control and signal processing \cite{HyvKar:01,NecNed:11}.  Usually, these problems are nonconvex and nonsmooth due to the presence of regularization terms and  constraints. Another difficulty when dealing with  these problems is the large number of terms in the finite-sum and   the large number of variables.  Hence, stochastic methods are the most appropriate to solve finite-sum optimization problems.  Among these methods, those based on stochastic first-order oracle are widely used,  e.g.,  stochastic  gradient descent  \cite{BatCur:18,MouBac:11,Mairal:15,NemJud:09,Nec:20}   or stochastic proximal point   \cite{Nec:20,RosVil:14}.  In these works,   for (strongly) convex objective functions and diminishing stepsizes, sublinear convergence results  were derived.  Although  stochastic first-order methods  work well in  applications, their convergence speed  is known to slow down close to saddle points or in ill-conditioned landscapes \cite{Nes:13}. On the other hand, stochastic higher-order methods (including e.g., cubic Newton and third-order  methods) use the curvature of the objective function and thus they converge faster to a  (local) minimum \cite{AgaKam:20,LucKoh:19,KovRic:19}.

\medskip 

\noindent  \textbf{Related work}.  Most of the optimization methods  can be interpreted from the majorization-minimization point of view, which consist  of successively minimizing some upper bounds of the objective function \cite{Mairal:15,NecLup:20}.  More specifically, in these methods at each iteration one constructs and optimizes a local upper bound  model (majorizer) of the objective  function using first- or higher-order derivatives (e.g., a Taylor approximation with an additional  penalty term that depends on how well the model approximates the real objective).  In this work, we also focus on stochastic  high-order majorization-minimization  methods to optimize a finite-sum objective function. Deterministic higher-order majorization-minimization methods which construct a local Taylor model of the objective in each iteration with an additional regularization term has gained a lot of attention in the last decade due to their fast convergence \cite{BirGar:17,cartis2017,Nes:19,NesBor:06,Nes:Inexat19,NecLup:20}.  For example,  first-order methods achieve convergence rates of order $\mathcal{O}(1/k)$ for smooth convex optimization \cite{Nes:13}, while   higher-order methods of order $p >1$ have converge rates $\mathcal{O}(1/k^p)$  \cite{Nes:19,Nes:Inexat19,NecLup:20}, where $k$ is the iteration counter. Accelerated variants of higher-order methods  were also developed recently, see e.g., \cite{Gas:19,Nes:Inexat19}.  Yet, these better convergence results assume a deterministic setting where access to exact evaluations of the higher-order derivatives is needed and they do not directly translate to the stochastic case. 

\medskip 

\noindent To the best of our knowledge,  the use of high-order  derivatives in a stochastic setting has received little consideration in the literature so far.  For example, \cite{AgaKam:20,LucKoh:19} (see also \cite{BolByr:18,TriSte:18,ZhoXu:18})  present  stochastic optimization methods that use  third- (second-) order Taylor approximations with a  fourth- (cubic-) order regularizations to find local minima of smooth and  nonconvex finite-sum objective functions.  Usually, these methods use sub-sampled derivatives instead of computing them exactly  and are able to find fast (first-/second-order) critical points. However, these methods require large sub-samples,  e.g., the batch size must be proportional  to $\mathcal{O}({1/\epsilon^2})$, where $\epsilon$ is the  desired accuracy for solving the problem.  In general, the theoretical bounds for the batch size usually exceed the number of functions from the finite-sum, leading basically to   deterministic algorithms. Furthermore, quasi-Newton type methods that perform updates in a deterministic (cyclic) fashion are given e.g.,  in \cite{MokEis:18}.  {The papers most related to our work are  \cite{KovRic:19,Mairal:15}, where  stochastic variants of the majorization-minimization approach are derived based on  first- or second-order Taylor approximations with  a proper regularization term. For these methods (local)  linear rates are derived when the finite-sum objective function is  sufficiently smooth and  strongly convex.  \textit{However, there is no  complete convergence analysis for general  stochastic higher-order majorization-minimization algorithms for solving general finite-sum problems (including e.g., nonconvex problems or problems having  nonsmooth regularization terms).}} 

\medskip 

\noindent \textbf{Contributions}.  This paper presents a stochastic higher-order algorithmic framework for minimizing finite-sum (possibly nonconvex and nonsmooth) optimization problems.  Our  framework is based on stochastic higher-order upper bound approximations of the  (non)convex and/or (non)smooth finite-sum objective function, leading to a   minibatch \textit{stochastic  higher-order majorization-minimization}  algorithm, which we call SHOM. We derive convergence guarantees for the SHOM algorithm for  general  optimization problems when the upper bounds approximate the finite-sum objective function  up to an error that is $p \geq 1$ times  differentiable and has a Lipschitz continuous $p $ derivative; we call such upper bounds \textit{stochastic higher-order surrogate} functions.   The main challenge in analyzing convergence of SHOM, especially in the nonconvex setting,  is the fact that even in expectation the cost cannot be used as a Lyapunov function, unless a proper secondary sequence is constructed from the algorithm. For general nonconvex problems we prove that SHOM is a descent method {in expectation}, derive asymptotic stationary point guarantees and under Kurdyka-Lojasiewicz (KL) property  we  establish the first {local} linear or sublinear  convergence rates (depending on the KL parameter)  for stochastic higher-order type algorithms under such an assumption. For convex problems we derive local superlinear or linear  convergence results, provided that  the   objective function is uniformly convex. 

\medskip

\noindent To prove convergence, stochastic  higher-order methods usually require  the evaluation of a large number of  higher derivatives (e.g., gradients, Hessians, etc.). On the other hand, our algorithmic framework does not have this drawback. In particular, one variant of our method requires  at each iteration  the random selection of a single function from the finite-sum and the computation of only its  higher-order derivatives.  Moreover, unlike most existing stochastic higher-order methods (e.g.,  second- and third-order methods), ours  have faster local convergence  than  first-order methods, as they adapt to the objective function’s curvature. Besides providing a general framework for the design and analysis of stochastic higher-order  methods, in special cases, where complexity bounds are known for some particular algorithms, our convergence results recover the  existing bounds (see Remark \ref{rem:comp}). In particular, for strongly convex functions and  $p=2$ we recover the local linear rate from  \cite{KovRic:19}. For the deterministic  case, i.e.,  the batch size is equal to the number of functions in the finite-sum, we obtain  local superlinear convergence  as in \cite{DoiNes:2019}.  {Finally, for $p=1$ SHOM coincides with the popular MISO algorithm \cite{Mairal:15} and a byproduct of our convergence analysis leads to new convergence results for  MISO in the nonsmooth and nonconvex settings under the KL assumption.}  Numerical simulations also confirm the efficiency of our algorithm,  i.e.  increasing the approximation order can have beneficial effects in terms of convergence. 

\medskip 

\noindent \textbf{Content}. The paper is organized as  follows. In section 2 we introduce  notations and some generalities. Then,  in section 3 we formulate the  optimization problem and  our  algorithm. In section 4 we provide a convergence analysis for the nonconvex case. For convex problems we derive  convergence rates in section 5.  Finally, section 6 presents some  preliminary numerical experiments on  convex and nonconvex applications.


\section{Notations and generalities} 
{ We denote a finite-dimensional real vector space with $E$ and by $E^*$ its dual space composed by linear functions on $E$. For	such a function $s \in  E^*$, we denoted by $\langle s, x\rangle$ its value at $x \in E.$ Utilizing a self-adjoint positive-definite operator $B: E \rightarrow E^*$ (notation $B = B^{*} \succ 0$), we can endow  these spaces with\textit{ conjugate Euclidean norms}:
$$\|x\|=\langle B x, x\rangle^{1 / 2} \quad \forall x \in E,  \quad \|y\|_{*}=\left\langle y, B^{-1} y\right\rangle^{1 / 2}  \quad \forall y \in E^{*}.$$ 
\noindent  Under these settings, we have the folowing expression for the dual norm:  $\|y\|_{*}= \max_{x: \|x\| \leq 1} \langle y, x \rangle$. }   For a smooth function  $\psi:   \text{dom} \psi \rightarrow \mathbb{R}$,  $p\geq 1$ times continuously differentiable on the convex and open domain $\text{dom}  \psi \subseteq E$,  denote by $\nabla \psi(x)$  its gradient and  $\nabla^2 \psi(x)$ its Hessian   at the point $x \in \text{dom} \psi$. Note that  $\nabla \psi(x) \in E^* $  and $  \nabla^2 \psi(x) h \in E^*$ for all $h \in E$.  In what follows, we often work with directional derivatives:   $D^p \psi(x) \left[h_{1}, \ldots, h_{p}\right]$ denotes	  the $p$ directional derivative of function $\psi$ at $x \in \text{dom} \psi$ along directions  $h_1, \cdots, h_p \in E$ (see also \cite{Nes:19} for a similar exposition). Note that $ D^p \psi(x) [\cdot]$ is  a symmetric  $p$  multilinear form on $E$.  For example, for a smooth function $\psi$ one has for any $x \in \text{dom}  \psi $ and $h, \bar h \in E$ that $D \psi(x)[h] = \langle  \nabla \psi(x), h\rangle$ and $D^2 \psi(x)[h,\bar h] = \langle  \nabla^2 \psi(x) h, \bar h\rangle$. On the other hand, in this paper we consider  for a nonsmooth function $\psi$  the directional derivative  classically defined as: $D \psi(x)[h] = \lim_{t \downarrow 0} \left(  \psi(x + th) - \psi(x) \right)/t$ (note that this definiton  is valid  not only for convex functions and, obviously, for  differentiable functions, but also for a broader class of functions, so-called locally convex).  The abbreviation $D^p \psi(x)[h]^{p}$
is used when all directions are the same, i.e., $h_{1}=\cdots=h_{p}=h$ for some $h \in E$. The norm of $D^p \psi(x)$ is defined in the standard way (see \cite{Nes:19}):
\begin{equation*} 
	\|D^p \psi(x) \|:=\max _{\left\|h_{1}\right\|,\cdots,\left\|h_{p}\right\| \leq 1}\left|D^p \psi(x)\left[h_{1}, \ldots, h_{p}\right]\right|  =  \max _{\left\|h\right\| \leq 1}\left|D^p \psi(x)\left[h\right]^p\right|   \text{.}
\end{equation*}
Note that for any fixed $x, y \in \text{dom} \psi$ the form  $D^p \psi(x) [\cdot] - D^p \psi(y) [\cdot]$ is also $p$ multilinear and symmetric. Then, we define the following class of smooth functions: 

\begin{definition} Let $\psi: E \rightarrow \mathbb{R}$ be $p\geq 1$ times continuously differentiable. Then, the $p$ derivative of $\psi$ is Lipschitz continuous if there exist $L_p^{\psi} > 0$  for which the following relation holds:
	\begin{equation} \label{eq:1}
		\| D^p \psi(x) - D^p \psi(y) \| \leq L_p^{\psi} \| x-y \| \quad  \forall x,y \in \text{dom} \psi.
	\end{equation}
\end{definition}

\noindent We denote the Taylor approximation of $\psi$ around $x \in \text{dom} \psi$ of order $p$ by:
$$
T_p^{\psi}(y;x)= \psi(x) + \sum_{i=1}^{p} \frac{1}{i !} D^{i} \psi(x)[y-x]^{i}  \quad \forall y \in E.
$$		
\noindent It is known that if \eqref{eq:1} holds, then by the standard integration arguments  the residual between function value and its Taylor approximation can be bounded \cite{Nes:19}:
\begin{equation}\label{eq:TayAppBound}
	|\psi(y) - T_p^{\psi}(y;x) | \leq  \frac{L_p^{\psi}}{(p+1)!} \|y-x\|^{p+1}  \quad  \forall x,y \in \text{dom} \psi. 
\end{equation}
{Applying the same reasoning for the functions $\langle \nabla \psi(x), h\rangle$ and $\langle \nabla^2 \psi(x) h, h\rangle$, with direction $h \in E$ being fixed,   we also get the following inequalities valid for all $ x,y \in \text{dom} \psi$ and $p \geq 2$, see \cite{Nes:19}:}
\begin{align} \label{eq:TayAppG1}
	&\| \nabla \psi(y) - \nabla T_p^{\psi}(y;x) \|_* \leq \frac{L_p^{\psi}}{p!} \|y-x \|^p, \\
	\label{eq:TayAppG2}
	&\|\nabla^2 \psi(y) - \nabla^2 T_p^{\psi}(y;x) \| \leq \frac{L_p^{\psi}}{(p-1)!} \| y-x\|^{p-1}.
\end{align}

\noindent  {For the Hessian we consider  the spectral norm of self-adjoint linear operators (maximal module of all eigenvalues computed with respect to operator $B$).}  Below are some  examples of functions  having known Lipschitz continuous~derivatives.

\begin{example}\label{expl:1}
	Given $x_0 \in \mathbb{R}^n$ {and a matrix  $B \succ 0 $, defining the norm $\|x\|=\langle B x, x\rangle^{1 / 2} $ for all $ x \in \mathbb{R}^n$,}  then the power of the Euclidean norm $\psi(x)= \left\|x-x_{0}\right\|^{p+1}$ with $p \geq 1$ satisfies the Lipschitz continuous condition \eqref{eq:1} with $L_{p}^{\psi}=(p+1)!$ {(see Theorem 7.1 in  \cite{RodNes:20} for more details)}.  \qed
\end{example}

\begin{example}\label{expl:2}
	For given   $(a_{i})_{i=1}^m \in {E}^{*}$, consider the log-sum-exp function:
	\[
	\psi(x)=\log \left(\sum_{i=1}^{m} e^{\left\langle a_{i}, x\right\rangle}\right) \quad \forall x \in {E}.
	\]
	For $B:=\sum_{i=1}^{m} a_{i} a_{i}^{*}$ (assuming $B \succ 0$, otherwise we can reduce dimensionality of the problem) we define the norm $\|x\|=\langle B x, x\rangle^{1 / 2} $ for all $ x \in {E}$ and then the Lipschitz continuous condition \eqref{eq:1} holds for  $p=1, 2$ and $ 3$ with $L_{1}^{\psi}=1, L_{2}^{\psi}=2$ and $L_{3}^{\psi}=4$, respectively {(see Lemma 4 in \cite{DoiNes:20} for details)}. Note that for  $m=2$ and $a_1 =0$, we recover the logistic regression function, widely used in machine learning \cite{GooBen:16}. \qed
\end{example}

\begin{example}\label{expl:3} If the $p+1$ derivative of a function $\psi$ is bounded, then the $p$ derivative of $\psi$ is Lipschitz continuous. Moreover, any polynomial of degree $p$ (e.g., the $p$ Taylor approximation of $\psi$, denoted $T_p^\psi$),  has the $p$ derivative Lipschitz with the Lipschitz constant zero (in particular, $L_p^{T_p^\psi} = 0$). \qed 
\end{example}

\noindent Further, let us introduce the class of uniformly convex functions.

\begin{definition}
	\noindent A function $\psi(\cdot)$ is \textit{uniformly convex} of degree $q \geq 2$ with the uniform constant  $\sigma_{q}>0$, if for any 	$x, y \in \text{dom} \psi $ we have:
	
\vspace{-0.5cm}	

	\begin{equation}\label{eq:unifConv}
		\psi(y) \geq \psi(x)+\left\langle \psi_{x}, y-x\right\rangle+\frac{\sigma_{q}}{q}\|x-y\|^{q},
	\end{equation}

\vspace{-0.3cm}	

\noindent 	where  {$\psi_{x} \in E^*$} is an arbitrary vector (subgradient) from the subdifferential $\partial \psi(x)$. 
\end{definition}

\noindent Note that  $q = 2$ corresponds to the strongly convex functions. Next, we provide a simple example of a uniformly convex function (see e.g.,  \cite{Nes:Inexat19} for more details). 
\begin{example}\label{expl:4}
	Given $x_0 \in \mathbb{R}^n$, {$B = I_n$} and $q\geq 2$, then the power of the Euclidean norm $\psi(x) =\frac{1}{q} \| x-x_0\|^{q} $ is uniformly convex of degree $q$ with  $\sigma_{q} = \left( \frac{1}{2}\right)^{q-2}$.  \qed
\end{example}


\section{Stochastic higher-order majorization-minimization algorithms}
We consider  the finite-sum (possibly nonsmooth and nonconvex)  problem:
\vspace*{-0.4cm}
\begin{equation} 
	\label{fw:eq1}
	\min_{x \in \mathcal{X}} f(x) = \frac{1}{N} \sum_{i = 1}^{N} f_i(x), 
\end{equation}

\vspace{-0.3cm}

\noindent where each function $ f_i \!:\! \text{dom} f \! \rightarrow \mathbb{R}$,  having $\text{dom} f \! \subseteq\!  E$  open set, {is lower semi-continuous} and $\mathcal{X} \!\subseteq\! \text{dom} f$ is a given  closed convex set. Our approach consists of associating to $f$ a stochastic $p$ higher-order surrogate function $g$. Let us define this~notion:

\begin{definition} \label{def:1}
	Given $f$ as in \eqref{fw:eq1} and the set of  points $\{x_1,\cdots,x_N\} \subset \mathcal{X} $,  we denote $\hat{x} = \left[x_1^T \cdots x_N^T\right]^T$. Then,   the function $g(y; \hat{x}) = \frac{1}{N}\sum_{i=1}^{N} g_i(y;x_i)$, with  $ g_i(\cdot;x_i): \text{dom} g_i \rightarrow \mathbb{R}$  having  $\mathcal{X} \subseteq \text{dom} g_i$ , is called a stochastic $p$ higher-order surrogate function at $\hat{x}$ over $\mathcal{X}$ if the following relations hold for all $i=1:N$:
	\renewcommand{\theenumi}{\roman{enumi}}
	\begin{enumerate}
		\item each $g_i$ is a majorizer for the function $f_i$ on  $\mathcal{X}$, i.e.  $g_i(y;x_i) \geq f_i(y) \; \forall y \in \mathcal{X}$.
		\item The error function $h_i(y;x_i) = g_i(y;x_i) - f_i(y)$ {with $\text{dom} h_i = E$} has the $p$ derivative  Lipschitz continuous  with Lipschitz constant $L_p^{h_i}$.
		\item  \!\!\!\! The  $j$  derivatives satisfy  $\!\nabla^j h_i(x_i;x_i) \!=\! 0 \;\;  \forall\! j\!=\!0\!:\!p$, where $j\!=\!0$ means \!\! $h_i(x_i;x_i) \!=\!~0$. 
	\end{enumerate}
\end{definition}

\noindent Hence, the previous three properties yield the following relations for the finite-sum  objective function $f$:
\begin{enumerate}
	\item $g$ is a majorizer for the finite-sum objective function $f$, i.e.   $g(y,\hat{x})\geq f(y) \,\, \forall y\in \mathcal{X}$.
	\item The error function  $h(y;\hat{x}) = \frac{1}{N} \sum_{i=1}^{N} h_i(y;x_i)$ has the $p$ derivative smooth with Lipschitz constant $L_p^h = \frac{1}{N}  \sum_{i=1}^{N}L_p^{h_i}$.
\end{enumerate}

\noindent See also \cite{Mairal:15} for a similar definition of a stochastic  first-order surrogate function (i.e., for the particular case $p=1$) and \cite{NecLup:20} for the definition of a higher-order surrogate function in the  deterministic settings.  Note that our formulation of the surrogate extendes both definitions given in  \cite{Mairal:15} and in \cite{NecLup:20}.  Next, we give several nontrivial examples of stochastic higher-order  surrogate functions satisfying  requirements of Definition \ref{def:1}. 

\begin{example}
	\label{expl:5} 
	(\textit{Lipschitz $p$ derivative stochastic surrogate}).  
	Assume that  each function $f_i: E \rightarrow \mathbb{R}$ has the $p$ derivative smooth with the Lipschitz  constant $L_p^{f_i}$. Then,  we can define the following stochastic $p$ higher-order surrogate over $E$:
	$$
	g(y;\hat{x}) = \frac{1}{N}\sum_{i=1}^{N} \Big[  \underbrace{T_p^{f_i}(y;x_i) + \frac{M_p^i}{(p+1)!} \| y-x_i \|^{p+1}}_{=g_i(y;x_i)} \Big ],
	$$
	where $ M_p^i \geq  L_p^{f_i}$ and $x_i \in \mathcal{X}$ for all $i=1:N$. Then, the error function $h(y;\hat{x}) = g(y;\hat{x}) -f(y)$ has the $p$ derivative smooth with the  Lipschitz constant $L_p^h = \frac{1}{N}  \sum_{i=1}^{N}M_p^i + L_p^{f_i}$.  Let us note that when each  function $f_i$ is convex the stochastic $p$ higher-order surrogate function $g$ is also convex in the first argument for any $\hat{x}$, provided that $M_p^i \geq p L_p^{f_i}$. This is a consequence  of the following lemma.   \qed
\end{example}

\begin{lemma} \cite{Nes:19} 
	\label{lm:nes}
	Let $f$ be convex function with  the $p>2$ derivative Lipschitz continuous with constant $L_p^f$. Then,  for $M_p \geq pL_p^f$ and any $x \in {E}$ the  function: 
	$$
	g(y;x) = T_p^f(y;x) + \frac{M_p}{(p+1)!} \| y-x \|^{p+1}
	$$ 
	is convex in the first argument.  
\end{lemma}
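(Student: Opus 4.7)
The plan is to establish convexity of $g(\cdot;x)$ by showing that its Hessian with respect to the first argument is positive semidefinite as an operator from $E$ to $E^*$. Since $g$ is a sum of the Taylor polynomial $T_p^f(\cdot;x)$ and the regularizer $\phi(y):=\frac{M_p}{(p+1)!}\|y-x\|^{p+1}$, I would bound the Hessian of each term separately and then add.

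First, I would control $\nabla^2_y T_p^f(y;x)$ from below. Because $f$ is convex, $\nabla^2 f(y)\succeq 0$, so by inequality \eqref{eq:TayAppG2} applied at any $y \in E$,
\begin{equation*}
\nabla^2 T_p^f(y;x) \;\succeq\; \nabla^2 f(y) - \frac{L_p^f}{(p-1)!}\|y-x\|^{p-1}\,D \;\succeq\; -\,\frac{L_p^f}{(p-1)!}\|y-x\|^{p-1}\,D,
\end{equation*}
where the operator inequality follows from the definition of the norm of a symmetric bilinear form together with \eqref{eq:ng4}.

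Next, I would compute $\nabla^2 \phi(y)$ with respect to the Euclidean structure induced by $D$. Setting $z=y-x$ and differentiating $\|z\|^{p+1}=\langle Dz,z\rangle^{(p+1)/2}$ twice gives, for any direction $h\in E$,
\begin{equation*}
\nabla^2 \phi(y)[h]^2 = \frac{M_p}{(p+1)!}\Bigl[(p+1)\|z\|^{p-1}\langle Dh,h\rangle + (p+1)(p-1)\|z\|^{p-3}\langle Dz,h\rangle^2\Bigr].
\end{equation*}
Dropping the nonnegative rank-one term yields the operator bound
\begin{equation*}
\nabla^2 \phi(y) \;\succeq\; \frac{M_p}{p!}\,\|y-x\|^{p-1}\,D.
\end{equation*}

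Adding the two bounds gives
\begin{equation*}
\nabla^2_y g(y;x) \;\succeq\; \frac{M_p - p L_p^f}{p!}\,\|y-x\|^{p-1}\,D,
\end{equation*}
which is positive semidefinite precisely when $M_p \geq p L_p^f$, thereby proving the claim. The only delicate point is keeping track of the operator inequalities in the $D$-weighted geometry and correctly deriving the Hessian of the power of the norm; once this bookkeeping is done the result follows from a direct comparison of coefficients.
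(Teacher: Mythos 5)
Your proof is correct. The paper itself gives no proof of this lemma---it is simply cited from \cite{Nes:19}---and your argument is precisely the one in that reference: bound $\nabla^2 T_p^f(y;x)$ from below via \eqref{eq:TayAppG2} together with $\nabla^2 f(y)\succeq 0$, compute the Hessian of the $D$-weighted power of the norm, drop the nonnegative rank-one term, and compare coefficients to obtain $\nabla^2_y g(y;x)\succeq \frac{M_p-pL_p^f}{p!}\|y-x\|^{p-1}D$. All the bookkeeping (the factor $(p+1)/(p+1)!=1/p!$ and the passage from the form-norm bound to the operator inequality in the $D$-geometry) checks out.
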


\begin{example} \label{expl:6}
(\textit{composite functions}). Assume that $f(x) = \frac{1}{N} \sum_{i}^{N} \phi_i(x) +  \psi(x)$, where each (possibly nonconvex) function $ \phi_i$ has the $p$ derivative smooth  with the  Lipschitz  constant $L_p^{ \phi_i}$ and $\psi$ is a simple function (possibly nonsmooth and nonconvex). Assume also that $\mathcal{X} \subseteq \text{dom} \psi$ and $\psi$ is  proper lower-semicontinous function. Define $f_i =  \phi_i +  \psi$.  Then, we have the following stochastic $p$ higher-order surrogate over $\mathcal{X} $:
	$$
	g(y;\hat{x}) =\frac{1}{N}\sum_{i=1}^{N} \Big[  \underbrace{T_p^{ \phi_i}(y;x_i) +\frac{M_p^i}{(p+1)!} \|y -x_i\|^{p+1} + \psi(y) }_{=g_i(y;x_i)} \Big],
	$$	
where $M_p^i \geq  L_p^{ \phi_i}$. Further, the error function $h(y;\hat{x}) = g(y;\hat{x}) -f(y)$ has the $p$ derivative smooth with  the Lipschitz constant  $L_p^h =\frac{1}{N}  \sum_{i=1}^N M_p^i + L_p^{\phi_i}$.  From Lemma 	\ref{lm:nes} it follows that  the surrogate $g$ is  convex in the first argument provided that the functions $\phi_i$'s and $\psi$ are convex and the constants $M_p^i$'s are sufficiently large.  \qed
\end{example}

\begin{remark}
In  Examples \ref{expl:5} and \ref{expl:6}, 	if additionally, each function $f_i$ and $\phi_i$ satisfies the condition $f_i(y) \geq T_p^{f_i}(y;x_i)$ and  $\phi_i(y) \geq T_p^{\phi_i}(y;x_i)$ for all $y$ and $x_i$, respectively,  then the Lipschitz constant for the error function $h$ can be improved to $L_p^h = \frac{1}{N}  \sum_{i=1}^N M_p^i$, see  \cite{NecLup:20}. \qed
\end{remark}

\begin{example}\label{expl:7}
	(\textit{bounded  derivative functions}). Assume $p$ odd and each function $f_i$ has the $p+1$ derivative  bounded by a symmetric multilinear form $H_i$,  i.e $\langle D^{p+1}f_i(x)[h]^p, h\rangle \leq \langle H_i [h]^p, h\rangle$ for all $h \in \mathbb{E}$ and $x \in \mathcal{X}$.  Then, each $f_i$ has the $p$ derivative  Lipschitz with constant $L_p^{f_i} \leq \|H_i\|$ (see Example \ref{expl:3}) and  for any constants  $M_p^i \geq 1$ we have the following stochastic $p$ higher-order surrogate over $\mathcal{X} $:
	$$
	g(y;\hat{x}) = \frac{1}{N}\sum_{i=1}^N    \Big[  \underbrace{  T_p^{f_i}(y;x_i) + \frac{M_p^i}{(p+1)!} \langle H_i[y-x_i]^p, y-x_i \rangle }_{=g_i(y;x_i)} \Big].
	$$ 
	Moreover,  the error function $h(y;\hat{x}) = g(y;\hat{x}) -f(y)$ has the $p$ derivative smooth with  the Lipschitz constant  $L_p^h =\frac{1}{N}  \sum_{i=1}^{N}\|H_i\| + L_p^{f_i}$.  From Lemma 	\ref{lm:nes} it follows that  the surrogate $g$ is  convex in the first argument provided that the functions $f_i$'s  are convex and the constants $M_p^i$'s are sufficiently large.  \qed 
\end{example}

\begin{example}
	\label{expl:8}
	(\textit{high-order proximal  functions}).  Let $f_i$, with $i=1:N$, be general proper lower-semicontinous functions (possibly nonsmooth and nonconvex). Then, for any $p \geq 1$ we have the following stochastic $p$ higher-order surrogate over $\mathcal{X}$:
		$$
		g(y;\hat{x}) = \frac{1}{N}\sum_{i=1}^N  \Big[  \underbrace{ f_i(y) + \frac{M_p^i}{(p+1)!} \|y -x_i\|^{p+1}  }_{=g_i(y;x_i)} \Big],
		$$ 
		for  parameters $M_p^i >0$.	Moreover,  the error function $h(y;\hat{x}) = g(y;\hat{x}) -f(y)$ has the $p$ derivative smooth with  the Lipschitz constant  $L_p^h =\frac{1}{N}  \sum_{i=1}^{N} M_p^i$ (according to Example \ref{expl:1}).  Note that if $f_i$'s are convex functions, then the corresponding surrogates $g_i$'s become uniformly convex;  if $f_i$'s are weakly convex functions and $p=1$, then  the surrogate functions $g_i$'s  become strongly convex for appropriate choices of $M_p^i$'s;  even for nonconvex functions $f_i$'s  and $p>1$, the $p+1$ regularization can have beneficial effects in  convexifying   $f_i$'s.  The reader can find other examples of stochastic surrogates depending on the application at hand.  \qed   
\end{example}

\noindent For solving the general (possibly nonsmooth and nonconvex) finite-sum   problem \eqref{fw:eq1}  we propose the following minibatch  Stochastic  Higher-Order Majorization-minimization   (SHOM) algorithm:

\begin{center}
	\framebox{
		\parbox{12.5 cm}{
			\begin{center}
				\textbf{ SHOM algorithm }
			\end{center}
			{ Given {$\tau \in [1, N]$, $p \geq 1$ and} $x_0 \in \mathcal{X}$,  define $x_0^j=x_0 \;\; \forall j=1:N$.\\
				For $k\geq 0$ do:}
			\begin{enumerate}
			\item[1.] Chose uniformly random a subset $S_k \subseteq \{1, \cdots,\, N\}$ of size $\tau$. 
			 \item[2.]  For each $i_k \in S_k$ compute  {$p$ higher-order surrogate} $g_{i_k}(y;x_k^{i_k})$ of $f_{i_k}$ near $x_k^{i_k}$,  otherwise keep the previous surrogates.
			 \item[3.]  Compute $x_{k+1}$ as a stationary point of subproblem: 
			 \vspace{-0.5cm}
			\begin{equation}
				\label{eq:subproblem}
				 \min_{y \in \mathcal{X}} g(y;\hat{x}_k) := \frac{1}{N} \sum_{j=1}^{N} g_j(y;x_k^j),
			\end{equation}
		
		 \vspace*{-0.5cm}
		
			where $\hat{x}_k = [x_k^j]_{j=1:N}$ and  $
			x_k^j = \begin{cases}
				x_k, \quad j \in S_k\\
				x_{k-1}^j, \quad j \notin S_k.
			\end{cases}
			$			
         \end{enumerate}
	}}
\end{center}

\noindent {Note that SHOM is a variance reduced method and for $p=1$ and $\tau=1$ it coincides with MISO algorithm given in  \cite{Mairal:15}.}    In our analysis below for the nonconvex case we consider  $x_{k+1}$ to be a stationary point of the subproblem  \eqref{eq:subproblem}  satisfying additionally the following descend property:
\begin{equation}
	\label{eq:descGhom}
	g(x_{k+1};\hat{x}_k) \leq g(x_k;\hat{x}_k) \quad \forall k \geq 0.
\end{equation}
Hence,   in the nonconvex case SHOM does not require the computation of a (local) minimum of  the surrogate function $g$ at $\hat{x}_k$ over the convex set $\mathcal{X}$, we just need $x_{k+1}$ to be a stationary point of \eqref{eq:subproblem} satisfying the descent \eqref{eq:descGhom}.  Moreover,  in the convex case this decrease always holds, since in this case  $x_{k+1}$ is computed as the global optimum of \eqref{eq:subproblem}. Let us recall the definition of finite-sum error function (see Definition \ref{def:1}):
\begin{equation*}
	h(y;\hat{x}_k) = \frac{1}{N} \sum_{j=1}^{N} h_j(y;x_k^j)=g(y; \hat{x}_k) - f(y),
\end{equation*}
where  $h_j(y;x^j_k) = g_j(y;x_k^j) - f_j(y)$.   Note that we can  update very efficiently the model approximation $g$ in  SHOM algorithm (see also Section \ref{sec:impl}):   
\begin{align}
\label{eq:model-aprox}	
	g(y;\hat{x}_k) 	&= g(y;\hat{x}_{k-1}) + \frac{\sum_{j \in S_k} g_j(y;{x}_k) - g_j(y;{x^j_{k-1}})}{N},
\end{align}
Moreover, since the  subset of indices  $S_k$ of size $\tau$ are chosen uniformly random, then  for scalars $\theta_i$, with $i=1:N$, the following  holds in expectation:  
\begin{equation}\label{eq:2}
{	\mathbb{E} \left[ \sum_{i \in S_k} \theta_i \right] = \frac{\tau}{N} \sum_{i=1}^{N}  \theta_i.}
\end{equation}
\noindent We denote the conditional expectation w.r.t. the filtration defined as the sigma algebra generated by the history of the random index set $S$, $\mathcal{F}_{k} = \sigma(\{S_0,\, \dots ,\, S_{k-1}\})$, by  $\mathbb{E}_k[\cdot] = \mathbb{E}[\cdot \mid\mathcal{F}_k]$.  Finally, we denote the directional derivative of a function $f$ at any $x \in \text{dom }f$ (assumed open set) along $y-x$ by $\nabla f\left(x\right)[y - x]$. Note that if  $f$ is differentiable, the directional derivative is $ \nabla f\left(x\right)[y - x]= \langle \nabla f(x), y - x \rangle$.


\section{Nonconvex convergence analysis}
In this section we assume a general nonconvex finite-sum (possibly nonsmooth) objective function  $f$ which has directional derivative at each point in its open domain $\text{dom }f$ (see Examples 	\ref{expl:5},  	\ref{expl:6}, 	\ref{expl:7} and	\ref{expl:8}). Under these general assumptions we define next the notion of stationary points for problem \eqref{fw:eq1} (see also \cite{Mairal:15} for a similar definition).

\begin{definition}\textit{(asymptotic stationary points).}  
	Assume that the   directional derivative of the function $f$ at any  $x \in \mathcal{X} \subseteq \text{dom }f$ along $y-x$ exists for all $y \in E$. Then, a sequence $\left(x_{k}\right)_{k\geq 0}$ satisfies the \textit{asymptotic stationary point} condition for the nonconvex problem \eqref{fw:eq1} if the following holds:
	\[
	\liminf _{k \rightarrow+\infty} \inf _{x \in \mathcal{X}} \frac{D f\left(x_{k}\right)[x-x_{k}]}{\left\|x-x_{k}\right\|} \geq 0.
	\]
\end{definition}

\noindent Now we are ready to analyze the convergence behavior of SHOM under these general settings (nonconvexity and nonsmoothness). First, we prove that the sequence generated by SHOM satisfies the asymptotic stationary point condition and the sequence  of function values decreases in expectation. Hence, SHOM algorithm is a descent method (in expectation). These results will help us later to derive explicit convergence rates for nonconvex and convex cases. 

\begin{lemma}
	\label{th:nonconv-gen}
	Assume that the objective function $f$ is nonconvex, bounded below and  has directional derivatives over  $\mathcal{X}$. Moreover, assume that  $f$ admits a stochastic   $p \geq 1$ higher-order surrogate function $g$ over  $ \mathcal{X}$  as given in Definition \ref{def:1} and the sequence $(x_k)_{k\geq 0}$ generated by SHOM is bounded. Then, the sequence of function values  in expectation $\left( \mathbb{E}[f(x_{k})]\right)_{k \geq 0}$ monotonically decreases, $(f(x_{k}))_{k \geq 0}$ converges  almost surely  and $\left(x_{k}\right)_{k \geq 0}$ satisfies the asymptotic stationary point condition with probability one.
\end{lemma}

\begin{proof}
	From  \eqref{eq:descGhom},  the definition of the surrogate function $g$ and the update rule of SHOM algorithm, we get:   
	\begin{align}
		\label{eq:nc1}
		g(x_{k+1};\hat{x}_k) & \leq g(x_k;\hat{x}_k) = g(x_k;\hat{x}_{k-1}) + \frac{\sum_{j \in S_k} g_j(x_k;{x}_k) - g_j(x_k;x^j_{k-1})}{N}. 
	\end{align}
	Further,  from the definition of the surrogate function, we have that for $\forall j \in S_k$: 
	$$g_j(x_k;x^j_k) =g_j(x_k;{x}_k)= f_j(x_k) \leq g_j(x_k;x^j_{k-1}),$$
	Moreover, for $j \notin S_k$, we have: 
	$$g_j(x_k;x^j_k) = g_j(x_k;x^j_{k-1}). $$ 
	Using these relations and  \eqref{eq:descGhom}, we obtain the following descent:
	\begin{align}
		\label{eq:decrease}
		f(x_{k+1}) \leq 	g(x_{k+1};\hat{x}_k)  \leq g(x_{k};\hat{x}_k)   \leq g(x_k;\hat{x}_{k-1}),
	\end{align}
	where the first inequality follows from the fact that $g$ majorizes $f$. Thus, the sequence $(g(x_{k+1};\hat{x}_k))_{k\geq 0}$ is monotonically decreasing and bounded below with probability $1$, since $g$ by definition is majorizing $f$, which by our assumption is bounded from below. Taking  expectation w.r.t. $\mathcal{F}_{k+1} $,  we  obtain that the sequence $(\mathbb{E}[g(x_{k+1};\hat{x}_k)])_{k\geq 0}$  converges.    Further, using basic properties of conditional expectation in \eqref{eq:decrease}, we have: 
	\begin{align}
		\label{eq:rel2}
		&\mathbb{E}[g(x_{k+1};\hat{x}_k)] \leq \mathbb{E}[g(x_{k};\hat{x}_{k})] = \mathbb{E}[\mathbb{E}_k \left[ g_i(x_{k};x_{k})  \right]] = \mathbb{E}[\mathbb{E}_k \left[ f_i(x_k)  \right]] = \mathbb{E}[f(x_k)]. 
	\end{align}  
	Taking  expectation w.r.t. the whole set of minibatches $\mathcal{F}_{k+1} $ in \eqref{eq:decrease} and combining with \eqref{eq:rel2}, we also get $ \mathbb{E}[f(x_{k+1})] \leq \mathbb{E}[f(x_{k})]$, i.e., SHOM algorithm is a descent method in expectation. Note also that the nonnegative quantity $\mathbb{E}\left[ g_{i}\left( x_{k+1};{x}_k\right) -  f_{i}\left(x_{k+1}\right)\right]$ is the summand of a converging sum and we have:
	$$
	\begin{aligned}
		&0 \leq  \mathbb{E}  \left[ \sum_{k=0}^{\infty} \sum_{i \in S_{k+1}}  g_{i}( x_{k+1};x_k) - f_{i} ( x_{k+1})\right] = \sum_{k=0}^{\infty} \mathbb{E}\!   \left[\sum_{i \in S_{k+1}}g_{i}\left( x_{k+1};x_k\right)-\! f_{i}\left(x_{k+1}\right)\right] \\
		&= \! \sum_{k=0}^{\infty} \mathbb{E}\left[\mathbb{E}_{k+1}\left[ \sum_{i \in S_{k+1}} g_{i}\left( x_{k+1};x_k\right)-\! f_{i}\left(x_{k+1}\right)\right]\right]\overset{\eqref{eq:2}}{=} \tau \sum_{k=0}^{\infty}\mathbb{E}\left[ g\left(x_{k+1};\hat{x}_k \right) -\! f(x_{k+1})\right],
	\end{aligned}
	$$
	where we used the Beppo-L\`{e}vy theorem to interchange the expectation and the sum in front of nonnegative quantities {(see also \cite{Mairal:15} for a similar derivation in the case $\tau=1$)}. Moreover, considering \eqref{eq:rel2}, we obtain:
	\begin{align*}
		\mathbb{E}\left[\sum_{k=0}^{+\infty} \sum_{i \in S_{k+1}}g_{i}\left( x_{k+1};x_k\right)-f_{i}\left(x_{k+1}\right)\right] 
		&\leq \tau \sum_{k=0}^{+\infty} \mathbb{E}\left[f(x_k) - f(x_{k+1})\right]\\ 
		&{\leq} \tau \left(f(x_0) - f^* \right) < \infty,
	\end{align*}
	where for the last inequality we used that  $f$ is assumed  lower bounded  e.g., by  $f^* > -\infty$. Hence,  nonnegative sequence $ (g(x_{k+1}; \hat{x}_{k}) \!-\! f(x_{k+1}))_{k \geq 0}$ converges to $0$ almost surely, and thus  sequence of function values  $(f(x_{k}))_{k \geq 0}$ also converges  almost surely.
	
	\medskip 
	
	\noindent  For the second part of the theorem,  we have from the optimality of $x_{k+1}$ that:
	$$
	\nabla g(x_{k+1};\hat{x}_{k})[y -x_{k+1}]  \geq 0 \quad \forall y \in \mathcal{X}.
	$$
	Thus, from the definition of the error function $h$ and the fact that $f$ is assumed to have directional derivatives, we have using basic calculus rules that:
	\begin{align*}
		\nabla f(x_{k+1})[y-x_{k+1}] &  = \nabla g(x_{k+1}; \hat{x}_{k})[y-x_{k+1}] - \nabla h(x_{k+1}; \hat{x}_{k})[y-x_{k+1}] \\
		& \geq - \langle  \nabla h(x_{k+1}; \hat{x}_{k}),  y- x_{k+1} \rangle \quad  \forall y \in \mathcal{X}. 
	\end{align*}
	Using the Cauchy-Schwarz inequality in the previous relation, we get:
	\begin{equation}\label{eq:25}
		\nabla f(x_{k+1})[y-x_{k+1}] \geq -\| \nabla h(x_{k+1}; \hat{x}_k)\|_* \, \|y- x_{k+1} \|\quad \forall y \in \mathcal{X}.
	\end{equation}
	For simplicity of the notation, let us define the function $\tilde{h}(y) = h(y;\hat{x}_k)$, which,  according to the Definition \ref{def:1}, has the $p$ derivative smooth with Lipschitz constant $L_p^h$.	Further, let us consider the following auxiliary  point:  
	$$y_{k+1} = \argmin_{y \in \mathbb{E}} T_p^{\tilde{h}}(y;x_{k+1}) + \frac{M_p^h}{(p+1)!} \|y-x_{k+1} \|^{p+1},$$
	where  $M_p^h > L_p^h$. Let us consider first the nontrivial case  $p \geq 2$. Using the (global) optimality of $y^{k+1}$, we have:
	\begin{align*}
		&T_p^{\tilde{h}}(y_{k+1};x_{k+1}) + \frac{M_p^h}{(p+1)!} \| y_{k+1}-x_{k+1}\|^{p+1} \\
		&  \leq T_p^{\tilde{h}}(x_{k+1};x_{k+1}) + \frac{M_p^h}{(p+1)!} \| x_{k+1}-x_{k+1}\|^{p+1} \\
		&= h(x_{k+1};\hat{x}_k) + \sum_{i=1}^{p} \frac{1}{i !} D^{i} h(x_{k+1};\hat{x}_k)[x_{k+1}-x_{k+1}]^{i} = h(x_{k+1};\hat{x}_k).
	\end{align*} 
	Moreover, developing the left term of the previous  inequality, we get:
	\begin{align*}
		& h(x_{k+1};\hat{x}_k) + \sum_{i=1}^{p} \frac{1}{i!} D^{i} h(x_{k+1};\hat{x}_k)[y_{k+1}-x_{k+1}]^{i} + \frac{M_p^h}{(p+1)!} \| y_{k+1}-x_{k+1}\|^{p+1} \\ 
		& \leq  h(x_{k+1};\hat{x}_k).
	\end{align*} 
	In conclusion, we obtain:
	\begin{equation}\label{eq:24}
		\sum_{i=1}^{p} \frac{1}{i!} D^{i} h(x_{k+1};\hat{x}_k)[y_{k+1}-x_{k+1}]^{i} + \frac{M_p^h}{(p+1)!} \| y_{k+1}-x_{k+1}\|^{p+1} \leq 0.
	\end{equation}
	From relation \eqref{eq:TayAppBound} we also have:
	\begin{equation*}
		\tilde{h}(y) =  h(y;\hat{x}_k) \leq T_p^{\tilde{h}}(y;x_{k+1}) + \frac{L_p^h}{(p+1)!} \| y-x_{k+1}\|^{p+1} \quad \forall y \in \mathbb{E}. 
	\end{equation*}
	We rewrite this relation for our chosen auxiliary point $y_{k+1}$ and get:
	\begin{align*}
		& h(y_{k+1};\hat{x}_k)  \leq T_p^{\tilde{h}}(y_{k+1};x_{k+1}) + \frac{L_p^h}{(p+1)!} \| y_{k+1}-x_{k+1}\|^{p+1} \\
		& = h(x_{k+1};\hat{x}_k) + \sum_{i=1}^{p} \frac{1}{i!} D^{i} h(x_{k+1};\hat{x}_k)[y_{k+1}-x_{k+1}]^{i} + \frac{L_p^h}{(p+1)!} \| y_{k+1}-x_{k+1}\|^{p+1} \\
		&\overset{\eqref{eq:24}}{\leq}h(x_{k+1};\hat{x}_k) - \frac{M_p^h -L_p^h}{(p+1)!} \| y_{k+1}-x_{k+1}\|^{p+1}.
	\end{align*}
	Recalling that $h$ is nonnegative, then it follows that:
	\begin{align*}
		0 \leq h(x_{k+1};\hat{x}_k) - \frac{M_p^h -L_p^h}{(p+1)!} \| y_{k+1}-x_{k+1}\|^{p+1}.
	\end{align*}
	This leads to:
	\begin{equation}
		\label{eq:lyap1}
		h(x_{k+1};\hat{x}_k) \geq  \frac{M_p^h -L_p^h}{(p+1)!} \| y_{k+1}-x_{k+1}\|^{p+1} \geq 0.
	\end{equation}		
	Since $h(x_{k+1}; \hat{x}_k)) = g(x_{k+1}; \hat{x}_{k}) - f(x_{k+1})$, using the first part of the proof we get that the sequence  $(h(x_{k+1}; \hat{x}_k))_{k \geq 0}$ converges to $0$ almost sure. Hence,  the sequence $(y_{k+1} - x_{k+1})_{k \geq 0}$ converges to $0$ almost sure. Moreover, from the optimality conditions for  $y_{k+1}$, we have:
	\begin{align}
		\label{eq:optcondy}
		\nabla h(x_{k+1};\hat{x}_{k}) + H_{k+1}[y_{k+1}-x_{k+1}] = 0,
	\end{align}
	where  we denote the matrix  
	\begin{align*}
		H_{k+1} & = \nabla^2 h(x_{k+1};\hat{x}_k) + \sum_{i=3}^{p} D^i  h(x_{k+1};\hat{x}_k)[y_{k+1}-x_{k+1}]^{i-2} + \frac{M_p}{p!} \| y_{k+1} -x_{k+1}\|^{p-1} B.
	\end{align*}
	Since the sequence $(x_{k})_{k\geq 0}$ generated by SHOM is assumed bounded and $h$ is $p$ times continuously  differentiable, then $ \nabla^{i} h(x_{k+1}; \hat{x}_k)$ are bounded for all $i= 2:p$. Moreover, since $y_{k+1} -x_{k+1}  \rightarrow 0$ as $k \to \infty$,  it follows that $y_{k+1}$ is also bounded (due to boundedness of $x_k$). Therefore, for $p\geq 2$ we have proved that   $H_{k+1}$ is bounded and  consequently:  
	$$ \nabla h(x_{k+1};\hat{x}_{k}) \to 0  \;\; \text{as}  \;\; k \to \infty \;\; \text{alsmost sure}. $$ 
	For  $p=1$ we can just take $y_{k+1} = x_{k+1} - 1/L_1^h \nabla h (x_{k+1;\hat{x}_k})$. {Then, using that $h(\cdot;\hat{x}_k)$ has gradient Lipschitz  with constant $L_1^h$, from \eqref{eq:TayAppBound} for $h$ and $p=1$, we obtain}:
	\[   0 \leq h(y_{k+1};\hat{x}_k)  \leq h(x_{k+1}; \hat{x}_k) - \frac{1}{2 L_1^h} \| \nabla h(x_{k+1};\hat{x}_k) \|^2_*, \]
	which further yields
	\[  \frac{1}{2 L_1^h} \| \nabla h(x_{k+1};\hat{x}_k) \|^2_*  \leq h(x_{k+1};\hat{x}_k) \to 0  \; \text{as}  \; k \to \infty, \]
	as  $h(x_{k+1};\hat{x}_k)$ converges to zero almost surely. Therefore, also in the case $p=1$ we have  $\nabla h(x_{k+1};\hat{x}_{k}) \to 0$.  Using this result in \eqref{eq:25}, minimizing over $y \in \mathcal{X}$ and taking the infimum limit,  we get that the sequence $\left(x_{k}\right)_{k \geq 0}$ satisfies the asymptotic stationary point condition for the nonconvex problem \eqref{fw:eq1}. This concludes our statements.  
\end{proof}

\noindent {Note that the main difficulty in the previous proof is to handle  $h$ having   $p \geq 2$ derivative smooth. We overcome  this difficulty by introducing a new sequence $(y_k)_{k \geq 0}$, proving that it has similar properties as the sequence  $(x_k)_{k \geq 0}$ generated by SHOM and then  using the  optimality conditions for $y_k$ instead of $x_k$.  Note that in practice we do not need to compute $y_k$.}  Moreover, if $f$ is differentiable and $\mathcal{X} = E$, the previous theorem states that the sequence of gradients $(\nabla f(x_{k}))_{k \geq 0}$ converges to $0$ almost sure.


	\subsection{Convergence rates under Kurdyka-Lojasiewicz  property}
	Recent works demonstrate that special geometric properties
	of the objective function, such as the Kurdyka-Lojasiewicz (KL) property, enable fast convergence rates for some first- or higher-order algorithms, see e.g., \cite{BolDan:07,NecLup:20}.  {However, from our knowledge, this is the first work deriving  convergence rates for  stochastic higher-order majorization-minimization algorithms under the KL property.  The main
challenge  when analyzing convergence of a stochastic algorithm under the KL is the fact that it is not trivial to combine  the local property of the KL with the stochasticity of the iterates. We overcome this problem using Egorov's theorem, which allows us to pass from almost sure convergence to uniform convergence.}  In this section we derive convergence rates for SHOM under the KL condition. For simplicity of the exposition, we assume  in this section $ \mathcal{X} = E$ and  the functions $f$ and  $g$ are  subdifferentiable on $E$.  Denote:

\vspace{-0.6cm}

	$$  S_f(x) = \text{dist}(0, \partial f(x)) \;  \left(:= \inf_{f^x \in \partial f(x)} \| f^x\|_* \right) \quad  \forall x \in E. $$  
	 
\vspace{-0.2cm}
	
\noindent If  $\partial f(x) = \emptyset$, we set $S_f(x) = \infty$.  We also assume that the optimization problem \eqref{fw:eq1} has  finite optimal value.   {Let $\Omega$ be the set of limit points of the sequence $(x_k)_{k \geq 0}$ generated by SHOM. From Lemma \ref{th:nonconv-gen}, we note that any limit point $x^* \in \Omega$ satisfies the first order stationary point condition for the original problem \eqref{fw:eq1}. Let us prove that  the objective function $f$ is constant on the set of limit points of the sequence $(x_k)_{k \geq 0}$, i.e., $\Omega$. Indeed, from Lemma \ref{th:nonconv-gen} we have that $\left(f(x_{k})\right)_{k \geq 0}$  converges {almost surely} to e.g., $ f_* > -\infty$}.  Let $x_*$ be a limit point of  $(x_k)_{k \geq 0}$.  Then, there exists a subsequence   $(x_{k_\ell})_{\ell \geq 0}$ such that $ x_{k_\ell} \to x_*$,  i.e., $x_* \in \Omega$. In this section only, we assume that $f$ is continuous.  
Then, we obtain $\lim_{\ell \to \infty}  f(x_{k_\ell}) = f(x_*) = f_*$ a.s., i.e., $f(\Omega) = f_*$   Based on these properties of $(x_k)_{k \geq 0}$, in the rest of this section we assume that $f$ satisfies additionally the following KL inequality on {a neighbourhood $\mathcal{U}$ of $\Omega$ and for some $\varepsilon >0$}, i.e.,  \cite{BolDan:07}:
	\begin{align}
		\label{eq:kl}
		f(x) - f_* \leq  \sigma_q  S_f(x)^q \quad \forall x \in \mathcal{U},  \; f_* < f(x) < f_*+\varepsilon, 
	\end{align} 

	\vspace{-0.2cm}
	
	\noindent where the KL parameter $q>1$ and $\sigma_q>0$.   Note that the KL property holds for a large class of functions including semi-algebraic functions and uniformly convex functions, see \cite{BolDan:07} for more examples. The next lemma  will be useful later in this section. Denote by  $\textbf{1}_{\Gamma}$  the indicator function of a set ${\Gamma}$.
	
	\begin{lemma}
		\label{lema:prob}	
		Given a \textcolor{black}{bounded} random variable $0 \leq  X \leq \Delta$ and a measurable set $\Gamma_\delta$ such that $\mathbb{P}(\Gamma_\delta) \geq 1-\delta$, the following inequality holds: $$ \mathbb{E}[X] - \Delta \sqrt{\delta} \leq   \mathbb{E}[X  \textbf{1}_{\Gamma_{ \delta}}]  \leq  \mathbb{E}[X]. $$
	\end{lemma}	
	
	\begin{proof}
		The  inequality from the right hand side is obvious, since $X \geq 0$.  For the other inequality, we can write $X = X \textbf{1}_{\Gamma_{ \delta}} + X \textbf{1}_{\Gamma_{ \delta}^C}$,  where  ${\Gamma_{ \delta}^C}$  is the complement of the set  ${\Gamma_{ \delta}}$. Then,  using the Cauchy-Schwartz inequality and the boundedness of $X$, we have: $\mathbb{E}[X] -  \mathbb{E}[X  \textbf{1}_{\Gamma_{ \delta}}] =   \mathbb{E}[X  \textbf{1}_{\Gamma_{ \delta}^C}]  \leq   ( \mathbb{E}[X^2] \;  \mathbb{E}[  \textbf{1}_{\Gamma_{ \delta}^C}^2]  )^{1/2}  \leq (\Delta^2   \mathbb{P}(\Gamma_{ \delta}^C))^{1/2}  \leq (\Delta^2 \delta)^{1/2}$.		
	\end{proof}		 
\noindent Note that since $f(x_{k})  \to f_*$ a.s., then there exists $\Delta < \infty$ such that $|f(x_{k})  - f_*| \leq \Delta$  for all $k \geq 0$. We get the following convergence rates for SHOM under the KL. 
		\vspace*{-0.1cm}
	\begin{theorem}
		\label{th:nonconv-gen-kl}
		Let  the objective function $f$ be proper and continuous (possibly  nonconvex),   satisfy the KL property \eqref{eq:kl} for some $q>1$ and $\varepsilon>0$,  admit   a stochastic   $p \geq 1$ higher-order surrogate  $g$  as in Definition \ref{def:1} and $M_p^h > L_p^h$.  \textcolor{black}{Additionally, assume that   the sequence  $(x_k)_{k \geq 0}$ generated by SHOM  satisfies $0 \leq f(x_{k})  - f_* \leq \Delta$   for all $k \geq 0$.}   Then,  for any $\delta>0$  exists  $k_{ \delta,\varepsilon}$ such that with probability at least $1 - \delta$:
	\vspace*{-0.1cm}
		\begin{enumerate}
			\item If $q = p+1$,  \textcolor{black}{then    locally $f(x_k)$ converges linearly  to a neighborhood of   $f_*$:}
			{$$ \mathbb{E}[f(x_{k})] - f_* \leq \left( \frac{\sigma_{q} c^q q!}{M_p^h - L_p^h + \sigma_{q} c^q q!}\right)^{k - k_{ \delta,\varepsilon}}  \left(\mathbb{E} [ f(x_{k_{ \delta,\varepsilon}}) ] -f_* \right)  {+ \Delta \sqrt{\delta}},$$} where $c = \max_{k \geq 0} \| H_{k+1}\|$ and $k > k_{ \delta,\varepsilon}$. 
			
			\item  If $q < p+1$,  \textcolor{black}{then    locally $f(x_k)$ converges sublinearly  to a neighborhood of   $f_*$:}
			{$$ \mathbb{E}[f(x_{k})] - f_* \leq \frac{ \mathbb{E}[f(x_{k_{ \delta,\varepsilon}})] - f_* }{ (1 + \bar \alpha (k -  k_{ \delta,\varepsilon}))^ \frac{q}{p+1-q}} {+ \Delta \sqrt{\delta}}, 
			$$}
	where $\bar \alpha >0$ is some appropriate constant and $k  > k_{ \delta,\varepsilon}$.
		\end{enumerate}	  
	\end{theorem} 

\begin{proof}
 From the proof of  Lemma \ref{th:nonconv-gen} (i.e., combining \eqref{eq:lyap1} with  	$h(x_{k+1}; \hat{x}_k)) = g(x_{k+1}; \hat{x}_{k}) - f(x_{k+1})$ and then taking expectation and the inequality \eqref{eq:rel2}), we have: 	
		\begin{align}
			\label{eq:hfy}
			{(M_p^h - L_p^h)}{((p+1)!)^{-1}} 	\mathbb{E} \left[ \| y_{k+1} - x_{k+1}\|^{p+1} \right ] \leq   	\mathbb{E} [f(x_k)] - 	\mathbb{E} [ f(x_{k+1}) ] \quad \forall k \geq 0.  	
		\end{align}	
Therefore, in expectation the cost function in combination with the sequences $x_k$ and $y_k$  can be used as a Lyapunov function.		From the optimality condition for $x_{k+1}$, we have $0 \in \partial g(x_{k+1};\hat{x}_k)$  and from $h= g-f$ we get  $- \nabla h(x_{k+1};\hat{x}_k)  \in  \partial f(x_{k+1})$  the  (limiting) subdifferential of $f$ at $x_{k+1}$.  Then, using 	\eqref{eq:optcondy}
		we obtain: 
		\begin{align} 
			\label{eq:nfyx} 
			S_f(x_{k+1})  & = \text{dist} (0, \partial f(x_{k+1}))   \leq   \| \nabla h(x_{k+1};\hat{x}_k) \|_* \nonumber \\
			& =  \| H_{k+1}(y_{k+1}-x_{k+1}) \|_*  \leq c \cdot \| y_{k+1}-x_{k+1} \|  \quad \forall k \geq 0, 
		\end{align}  
		where $c = \max_{k \geq 0} \| H_{k+1}\| < \infty$ according to  Lemma \ref{th:nonconv-gen}. Further, from Lemma \ref{th:nonconv-gen}  we have that $f(x_{k}) \to f_{*}$ a.s., which means that there exists some measurable set $\Gamma$ such that $ \mathbb{P}[\Gamma] =1$ and for  $\varepsilon$ and $\gamma \in \Gamma$  there exists $k_{\varepsilon}(\gamma)$ such that for any $k \geq k_{\varepsilon}(\gamma)$ we have   $f(x_k(\gamma)) - f_{*} \leq \sigma_q  S_f (x_k(\gamma))^q$.  Further,  invoking measure theoretic arguments to pass from almost sure convergence to almost uniform convergence thanks to Egorov’s theorem, we can prove that  for any $\delta>0$  there exists a measurable set $\Gamma_{ \delta}  \textcolor{black}{\subseteq \Gamma}$, such that $ \mathbb{P}[\Gamma_{\delta}]  \textcolor{black}{\geq} 1 - \delta$,  and for any  $\varepsilon >0$ there exists  $k_{\delta,\varepsilon} > 0$ such that for all $\gamma \in \Gamma_{ \delta}$ and $k \geq k_{\delta,\varepsilon} $ we have $f(x_k(\gamma)) - f_* \leq \sigma_q S_f(x_k(\gamma))^q$. Hence,  with probability at least  $1- \delta$ the sequence $(x_k)_{k\geq 0}$  satisfies KL  \textcolor{black}{on $\Gamma_{ \delta}$}  for all $k \geq k_{ \delta,\varepsilon} $, {i.e.,  $\left(f(x_k) - f_* \right) \textbf{1}_{\Gamma_{ \delta}} \leq \sigma_q S_f(x_k)^q \textbf{1}_{\Gamma_{ \delta}}$ for all $k \geq k_{ \delta,\varepsilon} $.  }  Then, with probability at least  $1- \delta$:
			\vspace*{-0.5cm}
		\begin{align}
			\label{eq:kl-rec}
			\left( f(x_{k+1})	- f_* \right)  {\textbf{1}_{\Gamma_{ \delta}}} & \leq \sigma_q   S_f(x_{k+1})^q {\textbf{1}_{\Gamma_{ \delta}}}  \leq \sigma_q   S_f(x_{k+1})^q  \overset{\eqref{eq:nfyx} }{\leq}  \sigma_q  c^q    \| y_{k+1}-x_{k+1}  \|^q  \nonumber \\ 
			 & =  \sigma_q  c^q   \left( \| y_{k+1}-x_{k+1}  \|^{p+1} \right)^{\frac{q}{p+1}} \quad \forall  k \geq k_{ \delta,\varepsilon}. 
		\end{align}	
		
		\vspace{-0.3cm}
		
	\noindent 	 \textcolor{black}{ Denote $\Delta_k = \mathbb{E} [  f(x_k)] - f_*$ and $C = \sigma_q  c^q  \left( \frac{(p+1)!}{M_p^h - L_p^h}  \right)^{\frac{q}{p+1}} $. If $q=p+1$, using Lemma 	\ref{lema:prob},  taking expectation in 	\eqref{eq:kl-rec} and combining with 	\eqref{eq:hfy}, we get:}
		\vspace*{-0.5cm}	
		\begin{align}
			\label{eq:kl-linear}
			\Delta_{k+1} - {\Delta \sqrt{\delta} } \leq C (\Delta_k - \Delta_{k+1})  \quad \forall k \geq k_{ \delta,\varepsilon},
	\end{align}
		which  \textcolor{black}{proves the first statement}. If  $q<p+1$,  using Lemma 	\ref{lema:prob},  taking expectation in 	\eqref{eq:kl-rec} and combining with 	\eqref{eq:hfy}, we obtain the following:
			\vspace*{-0.3cm}	
		\begin{align}
			\label{eq:kl-recc}
			& \mathbb{E} [	f(x_{k+1}) ]	- f_* - {\Delta \sqrt{\delta} }   \leq \sigma_q   \mathbb{E} [ S_f(x_{k+1})^q ]  \overset{\eqref{eq:nfyx} }{\leq}   \sigma_q  c^q    \mathbb{E} \left[ \left( \| y_{k+1}-x_{k+1}  \|^{p+1} \right)^{\frac{q}{p+1}} \right ]  \nonumber \\
			&  \leq  \sigma_q  c^q   \left( \mathbb{E} \left[  \| y_{k+1}-x_{k+1}  \|^{p+1} \right ]  \right)^{\frac{q}{p+1}}  \leq C   \left( 	\mathbb{E} [f(x_k)] - 	\mathbb{E} [ f(x_{k+1}) ] \right)^{\frac{q}{p+1}},  
		\end{align}	
		where in the third inequality we used concavity of  the function $u \mapsto (u)^{\frac{q}{p+1}}$ on $\mathbb{R}_+$ for $q<p+1$.  Hence, in this case  we obtain the following recurrence:
		\begin{align*}
			\Delta_{k+1} - \textcolor{black}{\Delta \sqrt{\delta} }  \leq C (\Delta_k - \Delta_{k+1})^{\frac{q}{p+1}}    \quad \forall k \geq k_{ \delta,\varepsilon}. 
		\end{align*}
		If we denote $ \Delta_k  - \textcolor{black}{\Delta \sqrt{\delta} } = C^{\frac{p+1}{p+1-q}} \tilde \Delta_k $, we further get the recurrence:
		\[  \tilde \Delta_{k+1}^{1 + \frac{p+1-q}{q}} \leq \tilde \Delta_k - \tilde  \Delta_{k+1}, \; \text{with} \;  \frac{p+1-q}{q} >0, \]
		and \textcolor{black}{then using similar arguments as in Lemma 11 in \cite{Nes:Inexat19}}, we obtain the following sublinear rate
		\begin{align}    
			\label{eq:kl-sublinear}
			\tilde \Delta_{k}  \leq \frac{   \tilde \Delta_{k_{ \delta,\varepsilon}} }{ (1 + \bar \alpha (k-k_{ \delta,\varepsilon}))^ \frac{q}{p+1-q}} \quad \forall k > k_{ \delta,\varepsilon}, 
		\end{align}
		for some appropriate  constant $\bar \alpha>0$, 	which  \textcolor{black}{proves the second statement}.
	\end{proof}	 

\noindent Previous theorem establishes linear or sublinear convergence results when the objective function satisfies the KL property without imposing convexity requirements on it.  For $p=1$ and $q=2$  we also get  new convergence rates for  MISO algorithm from \cite{Mairal:15} in the general nonsmooth and nonconvex settings. In the next section we derive convergence rates for convex objective functions.


\section{Convex  convergence analysis}
Throughout this section, we  assume that the objective functions $f$  is  convex and  $x_{k+1}$  is the global minimum of subproblem \eqref{eq:subproblem}. Note that if the functions $f_i$'s are all convex, then  for appropriate choices of constants  $M_p^i$'s (see Lemma \ref{lm:nes}), the subproblem \eqref{eq:subproblem} remains convex and then  the global minimum  $x_{k+1}$ can be computed using convex optimization algorithms. We analyze further the global convergence of the algorithm SHOM in the convex settings. Let $f^*$ and $x^*$ be the optimal value and a minimizer of the convex optimization problem 	\eqref{fw:eq1}.  As  usually assumed in convex optimization (see also e.g.,  \cite{Nes:19,Nes:13}), we consider that there exists $R>0$ such that:  
\begin{equation}
	\label{eq:d}
	\|x_k - x^* \| \leq R  \quad \forall k \geq 0. 
\end{equation}

\begin{theorem}
Let  the objective   function $f$ be convex and admit   a stochastic   $p \geq 1$ higher-order surrogate function $g$  as given in Definition \ref{def:1} and  $\bar L_p^h  = \max_{i=1:N} L_p^{h_i}$.    Moreover, let the sequence $(x_k)_{k\geq 0}$ generated by SHOM {with $\tau=1$} satisfy \eqref{eq:d} and $x_{k+1}$  be the global minimum of subproblem \eqref{eq:subproblem}.  Then,  $f(x_k)$ converges globally   in expectation to $f^*$ at a sublinear rate:
	$$
	\mathbb{E}[f(x_k)] - f^* \leq   \frac{\bar L_p^h  R^{p+1}}{p!}  \left(   \frac{p+1}{k} \right)^p. 
	$$
\end{theorem}

\begin{proof} 
Using Definition \ref{def:1} and $x_{k+1}$  the global minimum of subproblem \eqref{eq:subproblem},  we have:
	\begin{align*}
		f(x_{k+1}) \leq g(x_{k+1}; \hat{x}_k)  \leq g(x;\hat{x}_k )  \quad \forall x \in \mathcal{X}. 
	\end{align*} 
Furthermore, tacking  expectation and then the minimum over $\mathcal{X}$, we obtain:
\begin{align}
\mathbb{E}\left[f(x_{k+1})\right] &\leq \min_{x \in \mathcal{X}}\mathbb{E}\left[g(x;\hat{x}_k ) \right] = \min_{x \in \mathcal{X}}\mathbb{E}\left[  { \mathbb{E}_{k} } \left[g_i(x; x_k ) \right]\right] \nonumber\\
& \leq \min_{x \in \mathcal{X}}\mathbb{E}\left[   { \mathbb{E}_{k}} \left[ f_i(x) + \frac{L_p^{h_i}}{(p+1)!} \| x-x_k \|^{p+1} \right] \right] \nonumber\\
&  \leq  \min_{x \in \mathcal{X}}\mathbb{E} \left[ f(x) + \frac{\bar L_p^h}{(p+1)!} \| x-x_k \|^{p+1} \right], \label{eq:dltk}
\end{align}
where $\bar L_p^h  = \max_{i=1:N} L_p^{h_i}$.  Further,  our proof follows  a similar reasoning as in \cite{Nes:19} (Theorem 2).    For $k=1$, using the above inequality, we get: 
\begin{align*}
	\mathbb{E}\left[f(x_{1})\right] \leq \min_{x \in \mathcal{X}}\mathbb{E} \left[ f(x) + \frac{\bar L_p^h}{(p+1)!} \| x-x_0 \|^{p+1} \right] \overset{\eqref{eq:d}}{\leq} f^* + \frac{\bar L_p^h}{(p+1)!} R^{p+1},  
\end{align*}
and denoting  $\Delta_k = \mathbb{E}[f(x_k)] - f^*$,  we obtain:
\begin{align}
	\Delta_1 \leq \frac{\bar L_p^h}{(p+1)!} R^{p+1}.  \label{eq:dlt1}
\end{align}
Taking $x = \alpha x^* + (1-\alpha) x_k $ and using the convexity of $f$ in \eqref{eq:dltk}, we get for $k \geq 1$:
\begin{align*}
\mathbb{E}\left[f(x_{k+1})\right] \overset{\eqref{eq:d}}{\leq} \min_{\alpha \in [0,\,1]} \alpha f^* +  (1-\alpha) \mathbb{E}[f(x_k)] + \frac{\bar L_p^h}{(p+1)!} \alpha^{p+1} R^{p+1}. 
\end{align*}
The optimal $\alpha$ in the previous problem has the expression:
\begin{align}
	\label{eq:alfas}
	0< \alpha^* = \left(\frac{\Delta_k p!}{R^{p+1} \bar L_p^h} \right)^{\frac{1}{p}} \leq \left(\frac{\Delta_1 p!}{R^{p+1} \bar L_p^h} \right)^{\frac{1}{p}}\overset{\eqref{eq:dlt1}}{\leq}   \left( \frac{1}{p+1}  \right)^{\frac{1}{p}}< 1 \quad \forall  k\geq 1. 
\end{align}
Thus, we conclude that:
\begin{align*}
&\mathbb{E}\left[f(x_{k+1})\right] \leq \mathbb{E}\left[f(x_{k})\right] -  \frac{p}{p+1} \alpha^* \Delta_k \Leftrightarrow \Delta_{k+1} \leq \left( 1- \frac{p}{p+1} \alpha^* \right)\Delta_k \\
& \Leftrightarrow \Delta_k - \Delta_{k+1} \geq  \frac{p}{p+1} \alpha^* \Delta_k \overset{\eqref{eq:alfas}}{=} \frac{p}{p+1} \left(\frac{p!}{R^{p+1} \bar L_p^h}\right)^{\frac{1}{p}} \Delta_k^{\frac{p+1}{p}}. 
\end{align*}
Using further the same arguments as in Theorem 2 in \cite{Nes:19}  for the previous recurrence we obtain the statement. 
\end{proof}

\noindent  Note that in the previous theorem we do not require each individual function $f_i$, with $i=1:N$, to be convex, only  the finite sum function $f$ needs to be convex.  In the sequel we  prove, that SHOM can achieve locally faster rates  when  the finite-sum objective function $f$ is uniformly convex. More precisely,  we get   \textit{local (super)linear} convergence rates for SHOM in several optimality  criteria:  function values and distance of the iterates to the optimal point. For this we first need some auxiliary results.  Let $x^*$ be the unique solution of \eqref{fw:eq1} and assume, for simplicity, that  $L_p^{h_j} = L_p^{h}$ for all $j=1:N$. 	

\begin{lemma} \label{tm:1}
	Let $f$ be uniformly convex of degree $q \geq 2$ and constant $\sigma_q$ and  admit a stochastic   $p \geq 1$ higher-order surrogate function $g$ over  $ \mathcal{X}$  as given in Definition \ref{def:1}.  Then, the sequence $\left(x_{k}\right)_{k \geq 0}$ generated by SHOM satisfies:
	\begin{equation} \label{eq:c14}
		\frac{\sigma_{q}}{q}\|x_{k+1} - x^* \|^{q} \leq f(x_{k+1}) - f(x^*) \leq  \frac{L_p^{h}}{N(p+1)!} \sum_{j=1}^{N} \|x_k^j - x^* \|^{p+1} .
	\end{equation}
\end{lemma}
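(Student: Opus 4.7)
The plan is to prove the two inequalities in \eqref{eq:c14} independently, since they rely on disjoint structural assumptions on $f$ and on $g$.

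For the left inequality, I would exploit uniform convexity of $f$ at the optimum $x^*$. Writing \eqref{eq:unifConv} with $x=x^*$ and $y=x_{k+1}\in\mathcal{X}$, and noting that the first-order optimality condition at $x^*$ gives $\langle \psi_{x^*}, x_{k+1}-x^*\rangle \geq 0$ for a suitable subgradient $\psi_{x^*}\in\partial f(x^*)$ (or, more generally, that the directional derivative at $x^*$ along any feasible direction is nonnegative), the uniform-convexity inequality immediately yields
\[
f(x_{k+1})-f(x^*)\geq \frac{\sigma_q}{q}\|x_{k+1}-x^*\|^q.
\]

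For the right inequality, I would chain three simple facts. First, since $g$ is a majorizer of $f$ (property (i)), $f(x_{k+1})\leq g(x_{k+1};\hat{x}_k)$. Second, since we are in the convex setting and the surrogate can be assumed convex (cf.\ Lemma \ref{lm:nes} and Example \ref{expl:5}), the step of SHOM delivers a global minimizer of $g(\cdot;\hat{x}_k)$ over $\mathcal{X}$, so $g(x_{k+1};\hat{x}_k)\leq g(x^*;\hat{x}_k)$. Third, by the definition of the error function, $g(x^*;\hat{x}_k)=f(x^*)+h(x^*;\hat{x}_k)$. Combining these three ingredients reduces the problem to bounding $h(x^*;\hat{x}_k)=\tfrac{1}{N}\sum_{j=1}^N h_j(x^*;x_k^j)$.

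The key step is then to bound each $h_j(x^*;x_k^j)$. Here property (iii) of Definition \ref{def:1} makes the argument essentially free: the Taylor polynomial $T_p^{h_j(\cdot;x_k^j)}(x^*;x_k^j)$ vanishes identically, because $\nabla^i h_j(x_k^j;x_k^j)=0$ for all $i=0:p$. Combining this with the Lipschitz bound \eqref{eq:TayAppBound} applied to $y\mapsto h_j(y;x_k^j)$, which has $p$-th derivative Lipschitz with constant $L_p^{h_j}=L_p^{h}$, I obtain
\[
h_j(x^*;x_k^j)\leq \frac{L_p^h}{(p+1)!}\|x^*-x_k^j\|^{p+1}.
\]
Averaging over $j=1:N$ and inserting into the chain above delivers exactly the right-hand side of \eqref{eq:c14}.

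There is essentially no serious obstacle: the uniform-convexity half is one line, and the surrogate half is a textbook majorization-minimization estimate. The only delicate point is the legitimacy of the inequality $g(x_{k+1};\hat{x}_k)\leq g(x^*;\hat{x}_k)$, which requires that the subproblem be solved to global optimality; this is justified here because convexity of $f$ together with the standard choices of $g$ (Examples \ref{expl:5}--\ref{expl:7} together with Lemma \ref{lm:nes}) makes $g(\cdot;\hat{x}_k)$ convex on $\mathcal{X}$, so the $\argmin$ step of SHOM returns a true global minimizer.
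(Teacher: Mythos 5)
Your proposal is correct and follows essentially the same route as the paper's proof: uniform convexity at $x^*$ (with the optimality condition killing the subgradient term) for the lower bound, and the chain $f(x_{k+1})\leq g(x_{k+1};\hat{x}_k)\leq g(x^*;\hat{x}_k)=f(x^*)+h(x^*;\hat{x}_k)$ combined with the vanishing Taylor polynomial of $h_j$ at $x_k^j$ and the bound \eqref{eq:TayAppBound} for the upper bound. The paper derives the estimate for general $y\in\mathcal{X}$ before setting $y=x^*$, but this is an immaterial difference.
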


\begin{proof}
	Since  $h_j(y;x_k^j) = g_j(y;x_k^j) - f_j(y)$ has the $p$ derivative smooth with Lipschitz constant $L_p^{h}$, then from \eqref{eq:TayAppBound} we have:
	\begin{align*}
		|h_j(y;x_k^j) - T_p^{h_j}(y;x_k^j)| \leq \frac{L_p^{h}}{(p+1)!} \|y-x_k^j\|^{p+1}\quad \forall y.
	\end{align*}
	Note that from property 3 of Definition \ref{def:1}, we  have: 
		$$T_p^{h_j}(y;x_k^j) = h_j(x_k^j;x_k^j) + \sum_{i=1}^p \frac{1}{i!} D^i h_j(x_k^j;x_k^j)[y-x_k^j]^i= 0, \quad \quad \forall \,\, j=1:N.$$ Therefore, summing over  $j$ in the previous inequality, we obtain:
	\begin{align*}
		\frac{1}{N}\sum_{j=1}^{N} |h_j(y;x_k^j)| \leq \frac{1}{N} \frac{L_p^{h}}{(p+1)!} \sum_{j=1}^{N} \|y-x_k^j\|^{p+1} .
	\end{align*}
	Further, since  the surrogate function  $h_j( \cdot; x_k^j) \geq 0$, we get:
	\begin{align}\label{eq:s1}
		h(y;\hat{x}_k) \leq \frac{L_p^{h}}{N(p+1)!} \sum_{j=1}^{N} \|y-x_k^j\|^{p+1}.
	\end{align}
	On the other hand, from the global optimality of  $x_{k+1}$, we have:
	\begin{align*} 
		f(x_{k+1}) \leq g(x_{k+1}; \hat{x}_k) \leq g(y; \hat{x}_k) = f(y) + h(y;\hat{x}_k) \quad \forall  y \in \mathcal{X}.
	\end{align*}
	Using \eqref{eq:s1} in the above inequality, we obtain:
	\begin{align*}
		f(x_{k+1})\leq f(y) + \frac{L_p^{h}}{N(p+1)!} \sum_{j=1}^{N} \|y-x_k^j\|^{p+1} \quad \forall  y \in \mathcal{X}.
	\end{align*}
	Choosing $y = x^*$ we obtain the right side of  \eqref{eq:c14}.
	On the other hand, from the uniform convexity relation \eqref{eq:unifConv} for the function $f$  at $x = x^*$, we have:
	\begin{equation*}
		f(x_{k+1}) \geq f(x^*)+\frac{\sigma_{q}}{q}\|x^*-x_{k+1}\|^{q},
	\end{equation*}
	which yields the left side of relation \eqref{eq:c14}. This proves our statements.  
\end{proof}


\subsection{Local (super)linear convergence in function values}
In this section we prove local (super)linear convergence in function values for SHOM, provided that the objective function is uniformly convex. Note that in Lemma \ref{th:nonconv-gen} we have proved already that the objective function decreases along the iterates of SHOM.  From Lemma  \ref{tm:1} we obtain the following relation:
\begin{lemma} \label{lm:c2}
	Let $f$ be uniformly convex of degree $q \geq 2$ with constant $\sigma_q$ and  admit a stochastic   $p \geq 1$ higher-order surrogate function $g$ over  $ \mathcal{X}$  as given in Definition \ref{def:1}.  Consider the following Lyapunov function {along the sequence $\hat{x}_k$ generated by SHOM}\footnote{ {By Lyapunov function along the sequence $\hat{x}_k$ generated by SHOM we mean that we evaluate the Lyapunov function $	\Phi(\hat{x}) = \frac{1}{N} \sum_{j=1}^{N}(f(x^j) - f(x^*))^{\frac{p+1}{q}}$ at $\hat{x}_k$. }}:
	$$
	\Phi_k = \frac{1}{N} \sum_{j=1}^{N}(f(x_k^j) - f(x^*))^{\frac{p+1}{q}}.
	$$
	Then, we have:
	\begin{equation} \label{eq:c15}
		f(x_{k+1}) - f(x^*)\leq  \frac{L_p^{h}}{(p+1)!} \left(\frac{q}{\sigma_{q}}\right)^{\frac{p+1}{q}} \Phi_{k}.
	\end{equation}
\end{lemma}

\begin{proof}
	From the left hand side of relation \eqref{eq:c14},  we have	for all $j =1:N$ that:
	\begin{align}
		\left[\frac{q}{\sigma_{q}}(f(x_k^j) -f(x^*)) \right]^{\frac{1}{q}} \geq \|x^* - x_k^j \|.
	\end{align}
	Returning with this relation into the right side of \eqref{eq:c14}, we obtain: 
	\begin{equation}\label{eq:c9}
		f(x_{k+1}) - f(x^*)\leq  \frac{L_p^{h}}{N(p+1)!} \left(\frac{q}{\sigma_{q}}\right)^{\frac{p+1}{q}} \sum_{j=1}^{N} (f(x_k^j) -f(x^*))^{\frac{p+1}{q}}. 
	\end{equation}
	Thus, proving the  relation  \eqref{eq:c15}.  
\end{proof}


\noindent   Note that a similar Lyapunov function for $p=2$ and $q=2$ was considered in   \cite{KovRic:19}.

\begin{lemma}
	Let $f$ be uniformly convex of degree $q \geq 2$ with constant $\sigma_q$ and  admit a stochastic   $p \geq 1$ higher-order surrogate function $g$ over  $ \mathcal{X}$  as given in Definition \ref{def:1}.  Furthermore, let $\alpha \in \left(0, \, 1\right)$ and $C = \frac{L_p^{h}}{(p+1)!} \left(\frac{q}{\sigma_{q}}\right)^{\frac{p+1}{q}} $.  If  $q < p+1 $, then the following implication holds with probability 1: 
	\begin{equation}\label{eq:c16}
		f(x_{0}) - f(x^*) \leq \frac{\alpha^{\frac{q^2}{(p+1)^2-q(p+1)}}}{C^{\frac{q}{p+1 - q}}} \quad \Rightarrow \quad  
		\Phi_{k} \leq \frac{\alpha^{\frac{q}{p+1-q}}}{C^{\frac{p+1}{p+1 - q}}} \quad \forall k\geq 0. 
	\end{equation}
\end{lemma}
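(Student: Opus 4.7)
The approach is induction on $k$ with a \emph{strengthened} invariant, namely
\[
f(x_k^j) - f(x^*) \leq D_0 := \frac{\alpha^{q^2/[(p+1)(p+1-q)]}}{C^{q/(p+1-q)}} \qquad \forall\, j = 1:N,
\]
rather than inducting directly on $\phi_k$. Using the identity $(p+1)^2 - q(p+1) = (p+1)(p+1-q)$ and raising to the power $(p+1)/q$, one checks immediately that $D_0^{(p+1)/q} = B$, where $B := \alpha^{q/(p+1-q)}/C^{(p+1)/(p+1-q)}$ denotes the target bound; hence the strengthened invariant at step $k$ automatically yields $\phi_k \leq B$, which is the desired conclusion. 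The base case $k = 0$ is free, since SHOM initializes $x_0^j = x_0$ for every $j$, so the invariant collapses exactly to the hypothesis of the implication.

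For the inductive step I would split on whether $j \in S_{k+1}$ or not, using the SHOM update rule for $x_{k+1}^j$. If $j \notin S_{k+1}$, then $x_{k+1}^j = x_k^j$ and the bound is inherited directly from the inductive hypothesis. If $j \in S_{k+1}$, then $x_{k+1}^j = x_{k+1}$, and the task is to bound $f(x_{k+1}) - f(x^*)$; for this I invoke Lemma \ref{lm:c2} to obtain $f(x_{k+1}) - f(x^*) \leq C \phi_k \leq C B$ (the second inequality being the $\phi_k \leq B$ consequence of the strengthened IH), and it then suffices to verify algebraically that $CB \leq D_0$.

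The main obstacle is the exponent bookkeeping for this last inequality, which is where the two hypotheses $\alpha \in (0,1)$ and $q < p+1$ enter. Explicitly,
\[
\frac{CB}{D_0} = \alpha^{q/(p+1-q) \,-\, q^2/[(p+1)(p+1-q)]} = \alpha^{q(p+1-q)/[(p+1)(p+1-q)]} = \alpha^{q/(p+1)} \leq 1,
\]
where the last inequality uses $\alpha < 1$ together with $q/(p+1) > 0$. The role of the assumption $q < p+1$ is precisely to keep the defining exponents $q/(p+1-q)$ and $(p+1)/(p+1-q)$ of $B$ and $D_0$ positive, so that smaller $\alpha$ really does shrink the invariant region and the contraction runs in the correct direction. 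This closes the induction and yields the deterministic bound $\phi_k \leq B$ for every realization of $(S_l)_{l\leq k}$, hence in particular with probability $1$.
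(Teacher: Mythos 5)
Your proof is correct and follows essentially the same route as the paper: the same strengthened induction hypothesis (the paper states it as $(f(x_k^j)-f(x^*))^{(p+1)/q}\leq B$, which is exactly your $f(x_k^j)-f(x^*)\leq D_0$ after raising to the power $(p+1)/q$), the same case split on $j\in S_{k+1}$ using Lemma \ref{lm:c2}, and the same final check that $CB\leq D_0$ via $\alpha\in(0,1)$ and $q<p+1$. Your explicit computation $CB/D_0=\alpha^{q/(p+1)}$ is a slightly cleaner way of packaging the paper's exponent comparison, but the argument is identical in substance.
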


\begin{proof}
	The statement  follows as a corollary of a stronger result, that is  for any $j=1:N$ and $k \geq 0$ we have with probability $1$ that: 
	$$
	(f(x_k^j) - f(x^*))^{\frac{p+1}{q}} \leq \frac{\alpha^{\frac{q}{p+1-q}}}{C^{\frac{p+1}{p+1 - q}}} \quad \forall j=1:N.
	$$
	Let us show this by induction in $k$.  For $k = 0$ it is satisfied by our assumption. Let us assume that it  holds for $k$.  If $j \notin S_{k+1}$, then  $x_{k+1}^j = x_{k}^j$ and from the induction step $k$ we also get that it is valid for $k+1$.  If  $j \in S_{k+1}$, we have: 
	\begin{align*}
		&f(x_{k+1}^j) - f(x^*) = f(x_{k+1}) -f(x^*) \overset{\eqref{eq:c9}}{\leq} \frac{C}{N} \sum_{j=1}^{N} (f(x_{k}^j) - f(x^*))^{\frac{p+1}{q}}\\
		& \leq \frac{C}{N}\sum_{j=1}^{N} \frac{\alpha^{\frac{q}{p+1-q}}}{C^{\frac{p+1}{p+1 - q}}} =  \left(\frac{\alpha}{C} \right)^{\frac{q}{p+1-q}} \leq  \frac{\alpha^{\frac{q^2}{(p+1)(p+1-q)}}}{C^{\frac{q}{p+1-q}}},
	\end{align*}
	as  $\alpha \in \left(0, \, 1\right)$ and  $q < p+1 $.   This shows that:  
	$$\left(f(x_{k+1}^j) - f(x^*)\right)^{\frac{p+1}{q}} \leq \frac{\alpha^{\frac{q}{p+1-q}}}{C^{\frac{p+1}{p+1 - q}}} \quad \forall j=1:N. $$ This concludes our induction and the statement of the lemma.     
\end{proof}


\begin{theorem}
	\label{th:fv}
	Let $f$ be uniformly convex of degree $q \geq 2$ with constant $\sigma_q$ and  admit a stochastic   $p \geq 1$ higher-order surrogate function $g$ over  $ \mathcal{X}$  as given in Definition \ref{def:1}.  Then, the  Lyapunov function {along the sequence $\hat{x}_k$ generated by SHOM}, $	\Phi_k = \frac{1}{N} \sum_{j=1}^{N}(f(x_k^j) - f(x^*))^{\frac{p+1}{q}}$, satisfies the recursion:
	\[
	\mathbb{E}_{k}\left[\Phi_{k+1}\right] \leq\left[1-\frac{\tau}{N}+\frac{\tau}{N} \left( \frac{ q^{\frac{p+1}{q}}L_p^{h}}{\sigma_{q}^{\frac{p+1}{q}}(p+1)!} \right)^{\frac{p+1}{q}} \Phi_{k}^{\frac{p+1-q}{q}} \right] \Phi_{k}.
	\]
	Furthermore, let $\alpha \in \left(0, \, 1\right)$ and $C = \frac{L_p^{h}}{(p+1)!} \left(\frac{q}{\sigma_{q}}\right)^{\frac{p+1}{q}} $.  If  $q < p+1 $ and   $f(x_{0}) - f(x^*) \leq \frac{\alpha^{\frac{q^2}{(p+1)^2-q(p+1)}}}{C^{\frac{q}{p+1 - q}}}$, then we have the following linear rate:
	\[
	\mathbb{E}_{k}\left[\Phi_{k+1}\right] \leq\left(1-\frac{(1-\alpha)\tau}{  N}\right) \Phi_{k}.
	\]
\end{theorem}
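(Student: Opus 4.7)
My plan is to expand $\phi_{k+1}$ according to the update rule of SHOM, take the conditional expectation over the random index set $S_{k+1}$, and apply Lemma \ref{lm:c2} to absorb the single new contribution coming from the freshly minimized iterate $x_{k+1}$ into the Lyapunov function. For the second part I will invoke the uniform bound \eqref{eq:c16} from the preceding lemma to force the nonlinear factor in the recursion below $\alpha$.

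Concretely, splitting the definition of $\phi_{k+1}$ according to whether $j \in S_{k+1}$, and using $x_{k+1}^j = x_{k+1}$ in the first case and $x_{k+1}^j = x_k^j$ in the second, I get
$$\phi_{k+1} = \frac{1}{N} \sum_{j \in S_{k+1}} (f(x_{k+1}) - f(x^*))^{\frac{p+1}{q}} + \frac{1}{N} \sum_{j \notin S_{k+1}} (f(x_k^j) - f(x^*))^{\frac{p+1}{q}}.$$
Since $S_{k+1}$ is a uniform subset of size $\tau$, each fixed index lies in $S_{k+1}$ with probability $\tau/N$; applying \eqref{eq:2} to the first sum and its complementary version to the second yields
$$\mathbb{E}_{k}[\phi_{k+1}] \leq \frac{\tau}{N}\bigl(f(x_{k+1}) - f(x^*)\bigr)^{\frac{p+1}{q}} + \Bigl(1 - \frac{\tau}{N}\Bigr)\phi_k.$$
By Lemma \ref{lm:c2}, $f(x_{k+1}) - f(x^*) \leq C\phi_k$ with $C = \frac{L_p^{h}}{(p+1)!}(q/\sigma_q)^{(p+1)/q}$; hence raising to the $(p+1)/q$-th power, factoring $\phi_k$ out and using $(p+1)/q - 1 = (p+1-q)/q$ produces exactly the stated recursion.

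For the linear-rate statement, the initial condition $f(x_0) - f(x^*) \leq \alpha^{q^2/[(p+1)^2 - q(p+1)]}/C^{q/(p+1-q)}$ combined with relation \eqref{eq:c16} gives $\phi_k \leq \alpha^{q/(p+1-q)}/C^{(p+1)/(p+1-q)}$ for every $k \geq 0$ almost surely. Raising this bound to the power $(p+1-q)/q$ (legal because $p+1 > q$ and all quantities are nonnegative) and multiplying by $C^{(p+1)/q}$ yields
$$C^{\frac{p+1}{q}} \phi_k^{\frac{p+1-q}{q}} \leq \alpha,$$
so substitution into the recursion just established produces the advertised contraction $\mathbb{E}_{k}[\phi_{k+1}] \leq \bigl(1 - (1-\alpha)\tau/N\bigr)\phi_k$.

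The main obstacle is the exponent bookkeeping: one must verify carefully that the threshold appearing in \eqref{eq:c16}, namely $\alpha^{q/(p+1-q)}/C^{(p+1)/(p+1-q)}$, combines with the power $(p+1-q)/q$ and the multiplicative factor $C^{(p+1)/q}$ to collapse exactly to $\alpha$. Once this algebraic identity is checked, the rest is a direct application of the uniform sampling identity \eqref{eq:2} together with Lemma \ref{lm:c2}.
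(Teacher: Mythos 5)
Your proposal is correct and follows essentially the same route as the paper: the paper phrases the expectation step via the marginal distribution $\mathbb{P}(x_{k+1}^j = x_{k+1}\mid\mathcal{F}_k)=\tau/N$ rather than splitting the sum over $S_{k+1}$ and invoking \eqref{eq:2}, but this is the same computation, and the subsequent use of Lemma \ref{lm:c2} and of the uniform bound \eqref{eq:c16} (with the exponent cancellation $C^{\frac{p+1}{q}}\phi_k^{\frac{p+1-q}{q}}\leq\alpha$, which you verify correctly) matches the paper's argument exactly.
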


\begin{proof}
	Let us note that for all $j=1:N$, the points $x_{k+1}^j$  are drawn  from the following conditional probability distribution:
	\begin{align}
		\label{eq:prob}
		\mathbb{P}\left(x_{k+1}^{j}=x_{k+1} \mid \mathcal{F}_{k}\right)=\delta \text { and } \mathbb{P}\left(x_{k+1}^{j}=x_{k}^{j} \mid \mathcal{F}_{k}\right)=1-\delta,
	\end{align}	
	where $\delta:= \tau / N$ and $x_{0}^{j} = x_{0}$ for all $j=1:N$. Thus, we have for all $k$: 
	\begin{align*}
		\mathbb{E}_k\left[\Phi_{k+1}\right] & =\delta \mathbb{E}_k\left[\left( f(x_{k+1}) - f(x^*)\right)^{\frac{p+1}{q}}\right]+(1-\delta) \Phi_{k} \overset{\eqref{eq:c15}}{\leq}\delta \mathbb{E}_k \left[\left(C \Phi_{k}\right)^{\frac{p+1}{q}}\right] + (1-\delta) \Phi_{k} \\
		& = \delta\left(C \Phi_{k}\right)^{\frac{p+1}{q}}+ (1-\delta) \Phi_{k} \!= \!\left(\!1-\delta + \delta C^{\frac{p+1}{q}} \Phi_k^{\frac{p+1-q}{q}} \right) \! \Phi_{k}  \overset{\eqref{eq:c16}}{\leq}  \!\!
		\left( 1-(1-\alpha)\delta \right) \Phi_{k}.
	\end{align*}
	This proves our statements. 
\end{proof}

\begin{remark}  
	\label{rem:comp}
Although for stochastic first-order variance reduction methods, such as SAGA 	\cite{DefBac:14}, SVRG \cite{JohZha:13} or SARAH \cite{NguSch:17},  one can also obtain linear convergence rates, they depend on the condition number of the problem and consequently they converge slow on ill-conditioned problems.   	On the other hand, the convergence rates from Theorem \ref{th:fv}   are independent of the conditioning, thus, SHOM provably adapts to the curvature of the objective function. For example, for the particular case $p = q =2$ and $\alpha = \frac{1}{2}$ we get  a very fast linear rate depending on the minibatch size $\tau$: 
	\begin{equation}
		\label{eq:ind}
		\mathbb{E}_k\left[\Phi_{k+1}\right] \leq \left(1 - \frac{\tau}{2N}\right) \Phi_{k}.
	\end{equation}
A similar   convergence rate as in 	\eqref{eq:ind} was  derived in \cite{KovRic:19}  for an algorithm similar to SHOM, but for the particular surrogate function of Example \ref{expl:5} with $p =2$  (i.e., second-order Taylor expansion with cubic regularization) and strongly convex objective (i.e., $q=2$).  Our results cover more general problems (including composite objective, see Examples  \ref{expl:6},  \ref{expl:7},  \ref{expl:8}) and uniform convex cost. 

\medskip 

\noindent Further,  from Lemma  \ref{lm:c2} and Theorem \ref{th:fv}  
we get that  $\mathbb{E} [  f({x}_k) ] - f(x^*) $ converges linearly to $0$, locally.  Moreover,  when $\tau = N = 1$  SHOM becomes  deterministic and for such algorithm   we get from the first statement of Theorem \ref{th:fv}  local  superlinear convergence in function values, i.e. for $q < p+1$ we get: 
	$$
	f(x_{k+1}) - f(x^*) \leq  C(f(x_k) - f(x^*))^{\frac{p+1}{q}}. 
	$$ 
	A similar convergence result has been obtained  in  \cite{DoiNes:2019} for a deterministic algorithm derived from the particular surrogate function of Example  \ref{expl:6}  with $N=1$ (Taylor expansion with regularization).  \textit{However, our convergence analysis of SHOM  is derived for general surrogates. Note that there  is a major difference between the Taylor expansion and the model approximation based on  general surrogates (majorization-minimization framework). Taylor expansion is unique. The idea of the majorization-minimization approach is the opposite. A given objective function may admit  many upper bound models and every model leads to a different optimization algorithm.}  \qed
\end{remark}



\subsection{Local (super)linear convergence in the iterates}
\noindent In this section we prove local linear convergence in the iterates of SHOM, provided that the objective function is uniformly convex.   From Lemma  \ref{tm:1} we obtain immediately the following relation:
\begin{lemma} \label{lm:c3}
	Let $f$ be uniformly convex of degree $q \geq 2$ with constant $\sigma_q$ and  admit a stochastic   $p \geq 1$ higher-order surrogate function $g$ over  $ \mathcal{X}$  as in Definition \ref{def:1}.  If we consider the following Lyapunov function {along the sequence $\hat{x}_k$ generated by SHOM}:
	$$
	\mathcal{W}_{k} = \frac{1}{N} \sum_{j=1}^{N} \| x^j_{k} - x^* \|^{p+1}.
	$$
	then, we have:
	\begin{equation} \label{eq:c10}
		\|x_{k+1} - x^*\|^q \leq \frac{L_p^{h}q}{\sigma_{q}(p+1)!} \mathcal{W}_{k}.
	\end{equation}
\end{lemma}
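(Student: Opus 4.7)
The plan is to chain the two-sided bound already established in Lemma \ref{tm:1}. That lemma yields simultaneously a lower bound on $f(x_{k+1})-f(x^*)$ coming from uniform convexity of $f$ at $x^*$, and an upper bound coming from the surrogate majorization property together with the Lipschitz continuity of the $p$-th derivative of the error function. Concretely, it asserts
\[
\frac{\sigma_q}{q}\,\|x_{k+1}-x^*\|^{q} \;\leq\; f(x_{k+1})-f(x^*) \;\leq\; \frac{L_p^{h}}{N(p+1)!}\sum_{j=1}^{N}\|x_k^j-x^*\|^{p+1}.
\]
The first step is simply to discard the middle term and obtain the composite inequality between the extreme members. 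The second step is to multiply through by $q/\sigma_q$, which isolates $\|x_{k+1}-x^*\|^{q}$ on the left and produces the prefactor $L_p^{h}q/(\sigma_q (p+1)!)$ on the right. The third step is to recognize the remaining sum as exactly $N$ times $\mathcal{W}_k$ by the definition of the Lyapunov function, so the factor $1/N$ from the Lipschitz-to-Taylor bound is absorbed and the claim follows.

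There is no genuine obstacle — the entire analytic content was done in Lemma \ref{tm:1}, and the present lemma is pure bookkeeping. What matters is the \emph{form} in which the inequality is recast: by reading the right-hand side as $L_p^{h}q/(\sigma_q(p+1)!)\cdot \mathcal{W}_k$, one obtains the natural iterate-based analogue of Lemma \ref{lm:c2}, which in turn will make it possible to set up a recursion on $\mathbb{E}_k[\mathcal{W}_{k+1}]$ parallel to the one derived in Theorem \ref{th:fv} for $\phi_k$, using the same conditional update distribution for $x_{k+1}^j$ (drawn with probability $\tau/N$ from $x_{k+1}$ and otherwise inherited from $x_k^j$). Hence the only care to take in writing the proof is to keep track of the $1/N$ factor and to state explicitly that $\mathcal{W}_k$ coincides with the averaged $(p+1)$-th powers appearing on the right side of \eqref{eq:c14}.
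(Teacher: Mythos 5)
Your proposal is correct and matches the paper's intent exactly: the paper states that Lemma \ref{lm:c3} follows immediately from Lemma \ref{tm:1} and gives no further argument, and your chaining of the two extreme members of \eqref{eq:c14}, multiplying by $q/\sigma_q$, and identifying the averaged sum with $\mathcal{W}_k$ is precisely that immediate derivation.
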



\begin{lemma}
	Let $f$ be uniformly convex of degree $q \geq 2$ with constant $\sigma_q$ and  admit a stochastic   $p \geq 1$ higher-order surrogate function $g$ over  $ \mathcal{X}$  as given in Definition \ref{def:1}. Furthermore, let $\alpha \in (0, \, 1)$ and  $\bar{C} = \frac{L_p^{h}q}{\sigma_{q}(p+1)!}$.  If $q< p+1$, then the following implication holds with probability $1$: 
	\begin{equation} \label{eq:c13}
		\|x_0 -x^* \| \leq \frac{\alpha^{\frac{q}{(p+1)^2-q(p+1)}}}{\bar{C}^{\frac{1}{p+1-q}}} \quad \Rightarrow \quad 	\mathcal{W}_k \leq \frac{\alpha^{\frac{q}{p+1-q}}}{\bar{C}^{\frac{p+1}{p+1-q}}} \quad \forall k \geq 0. 
	\end{equation}
\end{lemma}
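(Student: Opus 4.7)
The plan is to strengthen the target claim from a bound on the averaged quantity $\mathcal{W}_k$ to a uniform bound on each summand, namely to prove by induction on $k$ that, with probability $1$,
\[
\|x_k^j - x^*\|^{p+1} \leq \frac{\alpha^{q/(p+1-q)}}{\bar{C}^{(p+1)/(p+1-q)}} \qquad \forall j=1:N, \; \forall k \geq 0.
\]
The desired bound on $\mathcal{W}_k$ then follows at once by averaging over $j$. This is the exact analogue of the strategy used in the preceding function-value lemma leading to \eqref{eq:c16}, and it is forced by the coupling between iterates: $x_{k+1}$ depends on all components of $\hat{x}_k$, so a per-component bound is more convenient to propagate than an averaged one.

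For the base case, I would use that $x_0^j = x_0$ for every $j$ and raise the hypothesis on $\|x_0 - x^*\|$ to the power $p+1$. The only arithmetic needed is the identity $(p+1)^2 - q(p+1) = (p+1)(p+1-q)$, which simplifies the $\alpha$-exponent to $q/(p+1-q)$ and matches the stated right-hand side. For the inductive step, I would fix $k$, assume the uniform bound at stage $k$, and split on whether $j \in S_{k+1}$ or not. If $j \notin S_{k+1}$ then $x_{k+1}^j = x_k^j$ and the bound is inherited. If $j \in S_{k+1}$ then $x_{k+1}^j = x_{k+1}$, and here I would invoke Lemma \ref{lm:c3} to get $\|x_{k+1} - x^*\|^q \leq \bar{C}\,\mathcal{W}_k$, substitute the inductive hypothesis for $\mathcal{W}_k$, and simplify $\bar{C}^{1 - (p+1)/(p+1-q)} = \bar{C}^{-q/(p+1-q)}$ to obtain $\|x_{k+1} - x^*\| \leq (\alpha/\bar{C})^{1/(p+1-q)}$.

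Raising this to the power $p+1$ gives $\|x_{k+1} - x^*\|^{p+1} \leq \alpha^{(p+1)/(p+1-q)} / \bar{C}^{(p+1)/(p+1-q)}$, and since $q < p+1$ and $\alpha \in (0,1)$ the inequality $\alpha^{(p+1)/(p+1-q)} \leq \alpha^{q/(p+1-q)}$ holds, which recovers exactly the inductive constant and closes the induction. I do not expect a genuine obstacle here: Lemma \ref{lm:c3} already supplies the core contraction estimate, the membership in $S_{k+1}$ handles the random selection cleanly, and the hypothesis $q < p+1$ is used in exactly one place, to flip the monotonicity of $\alpha \mapsto \alpha^r$ in our favor. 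The main thing to watch is bookkeeping of the fractional exponents of $\alpha$ and $\bar{C}$, which is routine but easy to mis-type.
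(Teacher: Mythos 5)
Your proposal is correct and follows essentially the same route as the paper: the paper also strengthens the claim to the per-component bound $\|x_k^j - x^*\|^{p+1} \leq \alpha^{q/(p+1-q)}/\bar{C}^{(p+1)/(p+1-q)}$, proves it by induction on $k$ splitting on $j \in S_{k+1}$ versus $j \notin S_{k+1}$, and invokes \eqref{eq:c10} together with $\alpha \in (0,1)$ and $q < p+1$ to close the inductive step. The exponent bookkeeping you describe matches the paper's computation exactly.
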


\begin{proof}
	The statement  follows as a corollary of a stronger statement, that is for any $j=1:N$ and $k \geq 0$ we have with probability $1$ that: 
	\begin{equation} \label{eq:c11}
		\left\|x^{j}_{k}-x^{*}\right\|^{p+1} \leq \frac{\alpha^{\frac{q}{p+1-q}}}{\bar{C}^{\frac{p+1}{p+1-q}}}.
	\end{equation}
	We will  prove this relation by induction over $k$.  Clearly,  \eqref{eq:c11} holds for $k=0$ from our assumption. Assume it holds for $k$ and let us show that it holds for $k+1$. If $j \notin S_{k+1}$, then  $x_{k+1}^j = x_{k}^j$  and  \eqref{eq:c11} holds from our  induction assumption. On the other hand, for $j \in S_{k+1}$, we have:
	\begin{align*}
		\|x_{k+1}^{j} - x^* \|^{p+1} &= \|x_{k+1} - x^* \|^{p+1} \overset{\eqref{eq:c10}}{\leq} \left(\frac{\bar{C}}{N} \sum_{j=1}^{N} \|x^* -x_k^j \|^{p+1}\right)^{\frac{p+1}{q}} \\
		& \overset{\eqref{eq:c11}}{\leq}\left(\frac{\bar{C}}{N}  \sum_{j=1}^{N}\frac{\alpha^{\frac{q}{p+1-q}}}{\bar{C}^{\frac{p+1}{p+1-q}}}\right)^{\frac{p+1}{q}} \leq \frac{\alpha^{\frac{p+1}{p+1-q}}}{\bar{C}^{\frac{p+1}{p+1-q}}} \leq \frac{\alpha^{\frac{q}{p+1-q}}}{\bar{C}^{\frac{p+1}{p+1-q}}},
	\end{align*}
	as $\alpha \in (0, \, 1)$ and $p+1 > q$.  Thus, relation \eqref{eq:c13} holds.    
\end{proof}


\begin{theorem}
	\label{th:iter}  
	Let $f$ be uniformly convex of degree $q \geq 2$ with constant $\sigma_q$ and  admit a stochastic   $p \geq 1$ higher-order surrogate function $g$ over  $ \mathcal{X}$  as given in Definition \ref{def:1}.  Then, the  Lyapunov function {along the sequence $\hat{x}_k$ generated by SHOM}, $	\mathcal{W}_{k} = \frac{1}{N} \sum_{j=1}^{N} \|x^* - x^j_{k} \|^{p+1}$,  satisfies the recursion: 
	$$\mathbb{E}_k\left[\mathcal{W}_{k+1}\right] \leq \left[1- \frac{\tau}{N} + \frac{\tau}{N} \left(\frac{L_p^{h}q}{\sigma_{q}(p+1)!}\right)^{\frac{p+1}{q}} \mathcal{W}_k^{\frac{p+1-q}{q}}\right] \mathcal{W}_k.
	$$
	Furthermore,   let  $\alpha \in \left(0,\, 1\right)$ and $\bar{C} =  \frac{L_p^{h}q}{\sigma_{q}(p+1)!}$.  If $q< p+1$  and $\|x_0 -x^* \| \leq \frac{\alpha^{\frac{q}{(p+1)^2-q(p+1)}}}{\bar{C}^{\frac{1}{p+1-q}}}$, then we have the following linear rate:
	$$ 
	\mathbb{E}_k\left[\mathcal{W}_{k+1}\right] \leq \left( 1-\frac{(1-\alpha) \tau}{N}\right) \mathcal{W}_k.
	$$
\end{theorem}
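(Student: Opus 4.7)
My plan is to mirror the proof of Theorem \ref{th:fv} almost line-for-line, substituting the iterate-based quantity $\|x^j_k - x^*\|^{p+1}$ for the function-value quantity $(f(x_k^j) - f(x^*))^{(p+1)/q}$ and leveraging Lemma \ref{lm:c3} plus the implication \eqref{eq:c13} in place of Lemma \ref{lm:c2} and \eqref{eq:c16}. The two ingredients I will use are essentially already in place, so the work is just to assemble them.

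First I would exploit the sampling rule of SHOM, which produces the conditional distribution
\[
\mathbb{P}(x_{k+1}^j = x_{k+1}\mid \mathcal{F}_k) = \delta, \qquad \mathbb{P}(x_{k+1}^j = x_k^j\mid \mathcal{F}_k) = 1-\delta,
\]
with $\delta = \tau/N$, exactly as in \eqref{eq:prob}. Taking conditional expectation termwise in the definition of $\mathcal{W}_{k+1}$ collapses the sum and gives
\[
\mathbb{E}_k[\mathcal{W}_{k+1}] = \delta\, \|x_{k+1}-x^*\|^{p+1} + (1-\delta)\,\mathcal{W}_k.
\]

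Next I invoke Lemma \ref{lm:c3} which says $\|x_{k+1}-x^*\|^q \leq \bar{C}\,\mathcal{W}_k$, and therefore $\|x_{k+1}-x^*\|^{p+1} \leq \bar{C}^{(p+1)/q}\, \mathcal{W}_k^{(p+1)/q}$. Substituting and factoring out $\mathcal{W}_k$ yields
\[
\mathbb{E}_k[\mathcal{W}_{k+1}] \leq \Bigl[1 - \delta + \delta\, \bar{C}^{(p+1)/q}\, \mathcal{W}_k^{(p+1-q)/q}\Bigr]\mathcal{W}_k,
\]
which, after unpacking $\bar{C} = L_p^h q / (\sigma_q(p+1)!)$, is precisely the first recursion in the statement.

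For the second claim I would use the implication \eqref{eq:c13}: under the stated initial-condition bound on $\|x_0-x^*\|$ and the assumption $q < p+1$, we have with probability one $\mathcal{W}_k \leq \alpha^{q/(p+1-q)}/\bar{C}^{(p+1)/(p+1-q)}$ for every $k \geq 0$. Raising both sides to the power $(p+1-q)/q$ gives $\bar{C}^{(p+1)/q}\,\mathcal{W}_k^{(p+1-q)/q} \leq \alpha$. Plugging this uniform bound into the bracket of the first recursion collapses it to
\[
\mathbb{E}_k[\mathcal{W}_{k+1}] \leq \bigl[1 - (1-\alpha)\delta\bigr]\mathcal{W}_k,
\]
which is the desired linear rate. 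There is no real obstacle here: the only thing to be careful about is making sure the initial-condition constant $\alpha^{q/((p+1)^2 - q(p+1))}/\bar{C}^{1/(p+1-q)}$ quoted in the theorem is the one actually established by the induction inside \eqref{eq:c13}, so I would just verify that exponent matches before assembling the final chain.
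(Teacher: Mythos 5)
Your proposal is correct and follows essentially the same route as the paper's proof: conditional expectation via the sampling distribution \eqref{eq:prob}, the bound $\|x_{k+1}-x^*\|^{p+1}\leq(\bar{C}\mathcal{W}_k)^{(p+1)/q}$ from Lemma \ref{lm:c3}, and the uniform bound from \eqref{eq:c13} to collapse the bracket to $1-(1-\alpha)\delta$. No gaps.
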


\begin{proof}
	From  \eqref{eq:prob}  we have for any $k$ that:
	\begin{align*}
		\mathbb{E}_k\left[\mathcal{W}_{k+1}\right] & =\delta \mathbb{E}_k\left[ \| x_{k+1} - x^* \|^{p+1}\right]+(1-\delta) \mathcal{W}_{k}\\
		&=\delta \mathbb{E}_k\left[\left(\| x_{k+1} - x^* \|^{q}\right)^{\frac{p+1}{q}} \right]+(1-\delta) \mathcal{W}_{k}\\
		& \overset{\eqref{eq:c10}}{\leq} \delta \mathbb{E}_k\left[\left(\bar{C} \mathcal{W}_{k}\right)^{\frac{p+1}{q}} \right]+(1-\delta) \mathcal{W}_{k} \\
		& = \delta \left( \bar{C} \mathcal{W}_{k}\right)^{\frac{p+1}{q}}+(1-\delta) \mathcal{W}_{k} = \left(1-\delta + \delta \bar{C}^{\frac{p+1}{q}} \mathcal{W}^{\frac{p+1-q}{q}}\right) \mathcal{W}_k  \\
		& \overset{\eqref{eq:c13}}{\leq} \left( 1-\frac{(1-\alpha)\tau}{N}\right) \mathcal{W}_k.
	\end{align*}
	This proves our statements. 
\end{proof}

\noindent Note that for any particular choice of $\alpha$ and $\tau$ we obtain from Theorem \ref{th:iter}  linear rate independent of the conditioning, thus, the method provably adapts to the curvature of the  function $f$. For example, $p = q =2$ and $\alpha =\frac{1}{4}$ 
yields:
$$ 
\mathbb{E}_k\left[\mathcal{W}_{k+1}\right] \leq \left( 1-\frac{3\tau}{4N}\right) \mathcal{W}_k.
$$  Moreover, from Lemma \ref{lm:c3} and Theorem \ref{th:iter} we get that  $\mathbb{E} [ \| {x}_{k} - x^* \|^{q} ] $ converges linearly to $0$, locally.   Furthermore,  when $\tau = N=1$  SHOM becomes deterministic and for such algorithm   we obtain from the first statement of Theorem \ref{th:iter}  local  superlinear convergence in the iterates, i.e. for  $q<p+1$  we get: 
$$
\|x_{k+1} - x^* \| \leq   \bar{C}^{\frac{1}{q}}    \|x_k - x^* \|^{\frac{p+1}{q}}.  
$$


\section{Numerical tests}
In this section we consider two applications, a classification problem based on the logistic  loss, which can be formulated as a convex finite-sum problem,  and  the independent component analysis problem,  formulated as a nonconvex finite-sum problem. {In all our numerical experiments we use the metric $B = I_n$ (the identity matrix).}

\subsection{Implementation details for SHOM }  
\label{sec:impl}
In this section we show that for structured finite-sum problems, such as composite problems with structure (see Example \ref{expl:6} and also the applications below),  SHOM subproblem can be solved  efficiently. More specifically, for given vectors $a_i$'s,  let us consider each term in the finite-sum of the form $f_i(x) =  \phi_i(a_i^Tx)  + \psi(x)$, where  tipically $ \phi_i$  are sufficiently smooth  (possibly nonconvex) functions (see  Example \ref{expl:2} and the applications below) and $\psi$ is a simple function such as  a regularizer or  indicator function of some (possibly nonconvex) set (see the problems \eqref{eq:logistic} or \eqref{eq:ica_pb} below). In this case  we can easily compute higher-order directional derivatives along a direction  $h$, e.g.,    
$D \phi_i(a_i^Tx)[h] = \phi_i^{\prime}(a_i^Tx) \cdot a_i^T h$, $D^2 \phi_i(a_i^Tx)[h]  = \phi_i^{\prime \prime}(a_i^Tx) a_i^Th \cdot a_i$ and $D^3 \phi_i(a_i^Tx)[h]^2  =  \phi_i^{\prime \prime \prime}(a_i^Tx)(a_i^Th)^2 \cdot a_i$.  Hence, in these settings, with  $p \in \{1, 2, 3\}$ and $M_p^j = M_p$ for all $j =1:N$, the surrogate of  each $f_j$ is:
$$
g_j(y;x_k^j) = T_p^{\phi_j}(y;x_k^j) + \frac{M_p}{(p+1)!} \|y-x_k^j \|^{p+1} + \psi(y). 
$$
 Note that we can  update very efficiently the model approximation $g$ in  SHOM algorithm using \eqref{eq:model-aprox}. 	
Then, the iteration of  SHOM requires solving a subproblem of the following form:
\begin{equation} 
		\label{sim:eq10}
	x_{k+1} = \!\argmin_{y} g(y;\hat{x}_k) = \frac{1}{N} \sum_{j=1}^{N}  \left( T_p^{\phi_j}(y;x_k^j) + \frac{M_p}{(p+1)!} \|y-x_k^j \|^{p+1}\right) +\psi(y). 
\end{equation}
Moreover,   subproblem \eqref{sim:eq10} can be solved tipically in closed form  for $p=1$. For  $p \in \{2, 3\}$ one can use  the powerful tools of convex optimization when the subproblem is (uniformly) convex (see e.g.  \cite{Nes:19,Nes:Inexat19,DefBac:14}),  IPOPT \cite{ipopt} or can use e.g., ADMM, which is known to converge also fast \cite{BoyPar:11}, when the subproblem is nonconvex.  Below, we briefly sketch  the main steps of ADMM.  More precisely, after a change of variables $z=y-x_k$,  subproblem \eqref{sim:eq10} can be written equivalently as:
\begin{equation} 
	\label{sim:eq1}
	\min_{z}  \sum_{\ell =1}^p \frac{1}{\ell !} \Delta_\ell [z]^\ell + \frac{\tau M_p}{N(p+1)!} \|z \|^{p+1} + \bar \psi(z) + \frac{1}{N} \sum_{j \not \in S_k}  \frac{M_p}{(p+1)!} \|z+x_k-x_k^j \|^{p+1}, 
\end{equation}
where $\Delta_\ell[z]^{\ell}$ are computed from  higher-order directional derivatives of $\phi_j$'s along directions $x_k^j$  and/or $z$ as explained above. Hence, we need to keep track of only scalar values, no need to memorize matrices or tensors.  Further, we can reformulate \eqref{sim:eq1}  as the following problem:
\begin{align} 
	\label{sim:eq2}
	& \min_{z, z^j}  \sum_{\ell =1}^p \frac{1}{\ell !} \Delta_\ell [z]^\ell +  \frac{\tau M_p}{N(p+1)!} \|z \|^{p+1}  + \bar \psi(z) +  \frac{1}{N} \sum_{j \not \in S_k}   \frac{M_p}{(p+1)!} \|z^j \|^{p+1}  \\
	&  \text{s.t.} \quad z^j=z + x_k - x_k^j \quad \forall j \not \in S_k.  \nonumber 
\end{align}
At each iteration of ADMM one needs to solve a subproblem in $z$ of the form:
\[  	\min_{z}  \sum_{\ell =1}^p   \frac{1}{\ell !}  \bar \Delta_\ell [z]^\ell +  \frac{\tau M_p}{N(p+1)!} \|z \|^{p+1} + \bar \psi(z),  \]
for which there are efficient algorithms when $p \in \{2,3\}$  (see e.g., \cite{NesBor:06,Nes:Inexat19}); and  $N -\tau$ subproblems in $z^j$ of the form (here $\Lambda_j \in \mathbb{R}^n$ and $\rho>0$):
\[  	\min_{z^j}   \langle \Lambda_j,  z^j \rangle + \frac{\rho}{2}\|z^j\|^2 + \frac{M_p}{(p+1)!} \|z^j \|^{p+1}, \]
which can be solved in parallel and in closed form. More specifically,   the optimality condition yields $\Lambda_j + \rho z^j + \frac{M_p}{(p)!} \|z^j \|^{p-1} z^j=0 $. One can notice that if $\|z^j \|^{p-1}$ is known, then one can easily compute the optimal $z^j$ from the previous equality. Hence, taking the norm in the optimality condition we get an equation in $\|z^j\|$ of the form $\|\Lambda_j\| =  \rho \|z^j\| + \frac{M_p}{(p)!} \|z^j \|^{p}$, whose roots can be found explicitly for $p \in \{2, 3\}$. Finally, the Lagrange multipliers are updated appropriately, see \cite{BoyPar:11}.   { We can stop the algorithm when the feasibility constraints in \eqref{sim:eq2} are below some desired accuracy or after  a maximum number of  ADMM steps.}

  


\subsection{Classification problem}
The problem of  fitting a generalized linear model with $l_2$ 
regularization can be formulated as \cite{GooBen:16}: 
\begin{equation}
	\label{eq:logistic}
	\min_{x} f(x) =  \frac{1}{N} \sum_{i=1}^{N} \underbrace{\phi(a_i^Tx)}_{=f_i(x)} + \frac{\lambda}{2}\|x\|^2,
\end{equation}
where $a_{1}, \ldots, a_{N} \in \mathbb{R}^{n} $ are given vectors ,  $\lambda > 0$ is a  fixed scalar,  and $\phi: \mathbb{R} \to \mathbb{R}$ are convex functions, at least three times differentiable and sufficiently smooth. One such example is the logistic function $\phi(t) = \log(1 + e^{t})$, which appears often in machine learning applications, e.g.,  classification or regression tasks \cite{GooBen:16}.   We also consider this function for numerical experiments and  two  datasets  taken from  LIBSVM  \cite{Libsvm}:  madelon ($N = 2.000$ data with $n=500$ features) and a8a (the first $N = 5.000$  data with $n = 122$ features).  The regularization parameter  is $\lambda = 10^{-3}$ and the other parameters are:  {$L_1^{f_i} = \frac{1}{4} \| a_i\|^2\, (M_1 \equiv M_1^i = \max_{i=1:N} L_1^{f_i})$, $L_2^{f_i} = \frac{1}{3} \| a_i\|^3\, (M_2 \equiv M_2^i =  \max_{i=1:N} 2L_2^{f_i})$ and $L_3^{f_i} = \frac{2}{3} \sum_{i=1}^{N} \| a_i\|^4 \,(M_3 \equiv M_3^i = \max_{i=1:N} 3L_3^{f_i})$, for all $i=1:N$, respectively.}   

\begin{figure}[!h]
	\includegraphics[height= 0.2\textheight, width=0.3\textheight]{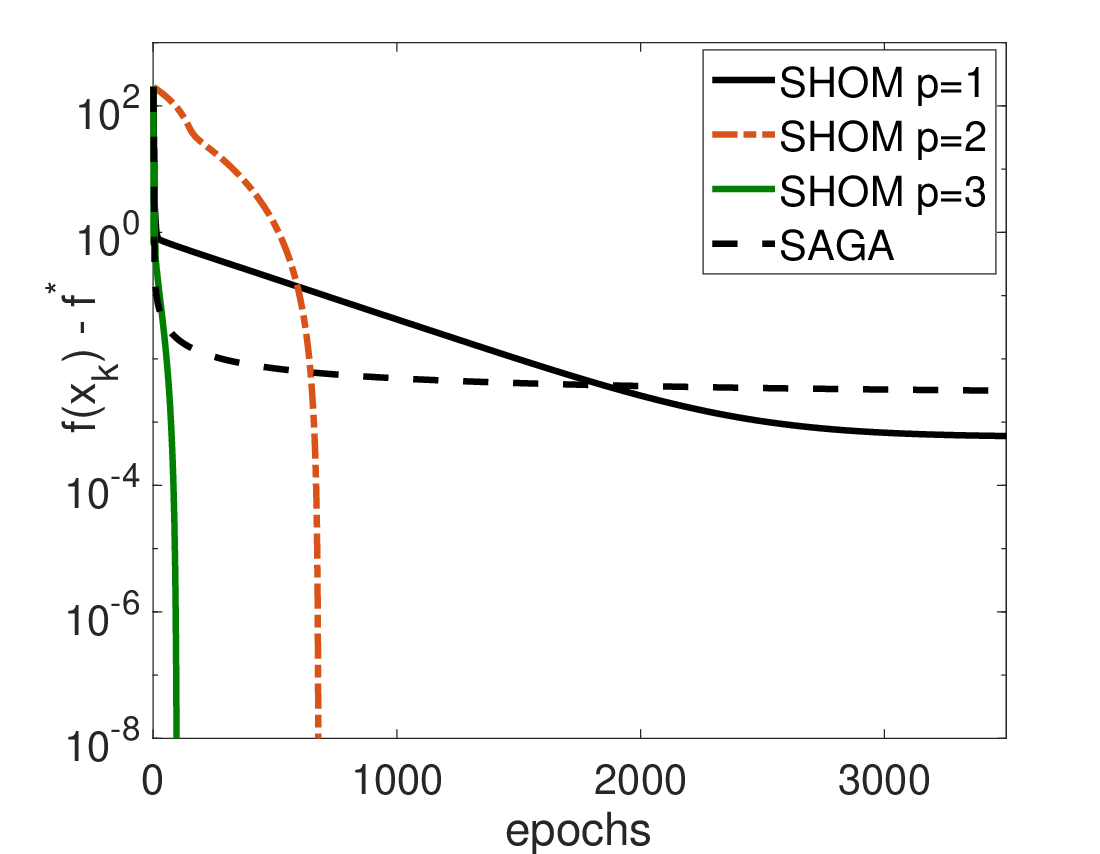}
	\hspace{-0.3cm}
	\includegraphics[height= 0.2\textheight, width=0.31\textheight]{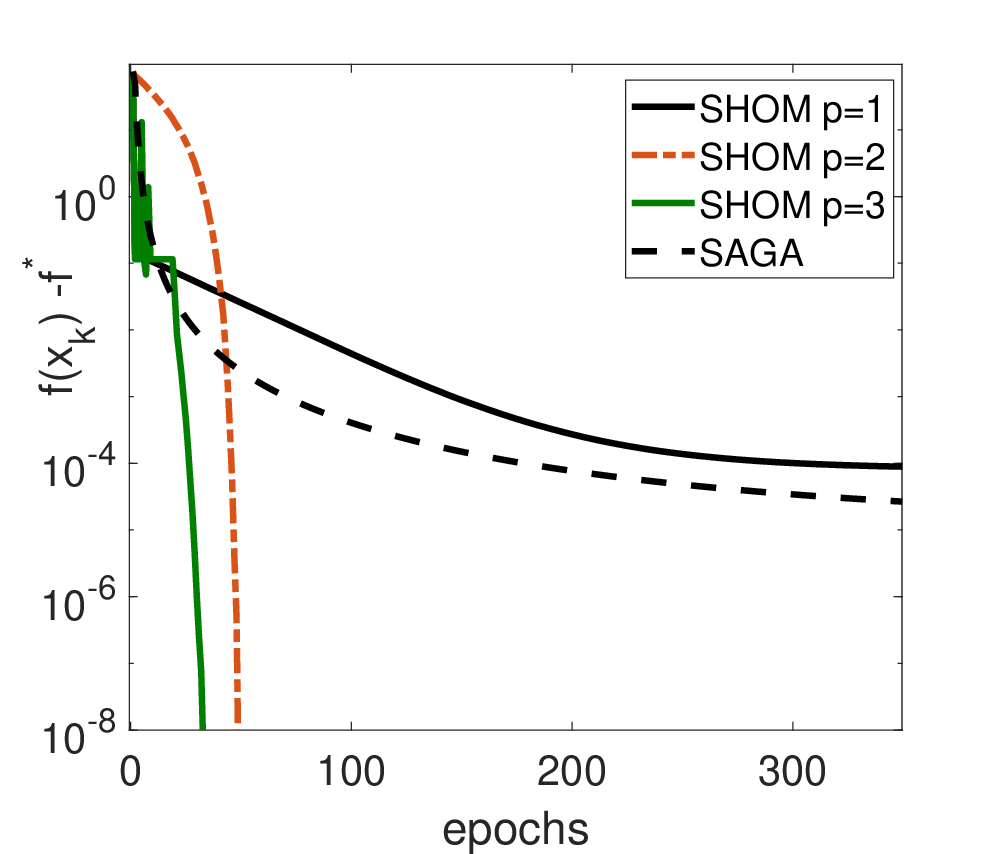}
	\caption{Behavior of SHOM ($p=1,2,3$) and SAGA: left - a8a, right - madelon.}
\end{figure}

\noindent We first analyse the behavior of  algorithm SHOM for different values of $p \in \{1, 2, 3\}$.  We also compare the   SHOM variants with SAGA, a first-order variance reduction algorithm 	\cite{DefBac:14}.  In this experiment we consider the minibatch size  {$\tau=300$ for SHOM and SAGA}. The results are given in Figure 1, which shows that for a relatively small minibatch size the  increasing of the approximation order $p$ in SHOM algorithm  has beneficial effects in terms of convergence rates (less number of epochs, i.e. number of passing through data) and in terms of achieving very high accuracies. Moreover, SHOM algorithm with $p>1$  is performing better than SAGA or SHOM with $p=1$, {i.e., much less number of epochs and less CPU time (see Table \ref{table:1}).}  

\begin{table}
\centering	
\caption{{CPU time in seconds for algorithms SHOM $(p = 1,\, 2,\, 3)$ and SAGA on dataset $a8a$.}}
	\begin{tabular}{|c|c|c|c|c|}
	\hline
	Alg. & SHOM $p=3$ & SHOM $p=2$ & SHOM $p=1$ & SAGA \\
	\hline
	CPU time & 268 & 283 & 305 & 296 \\
	\hline
	\end{tabular}
\label{table:1}
\end{table}

\begin{figure}[!h]
	\centering
	\includegraphics[height= 0.18\textheight,width=0.31\textheight]{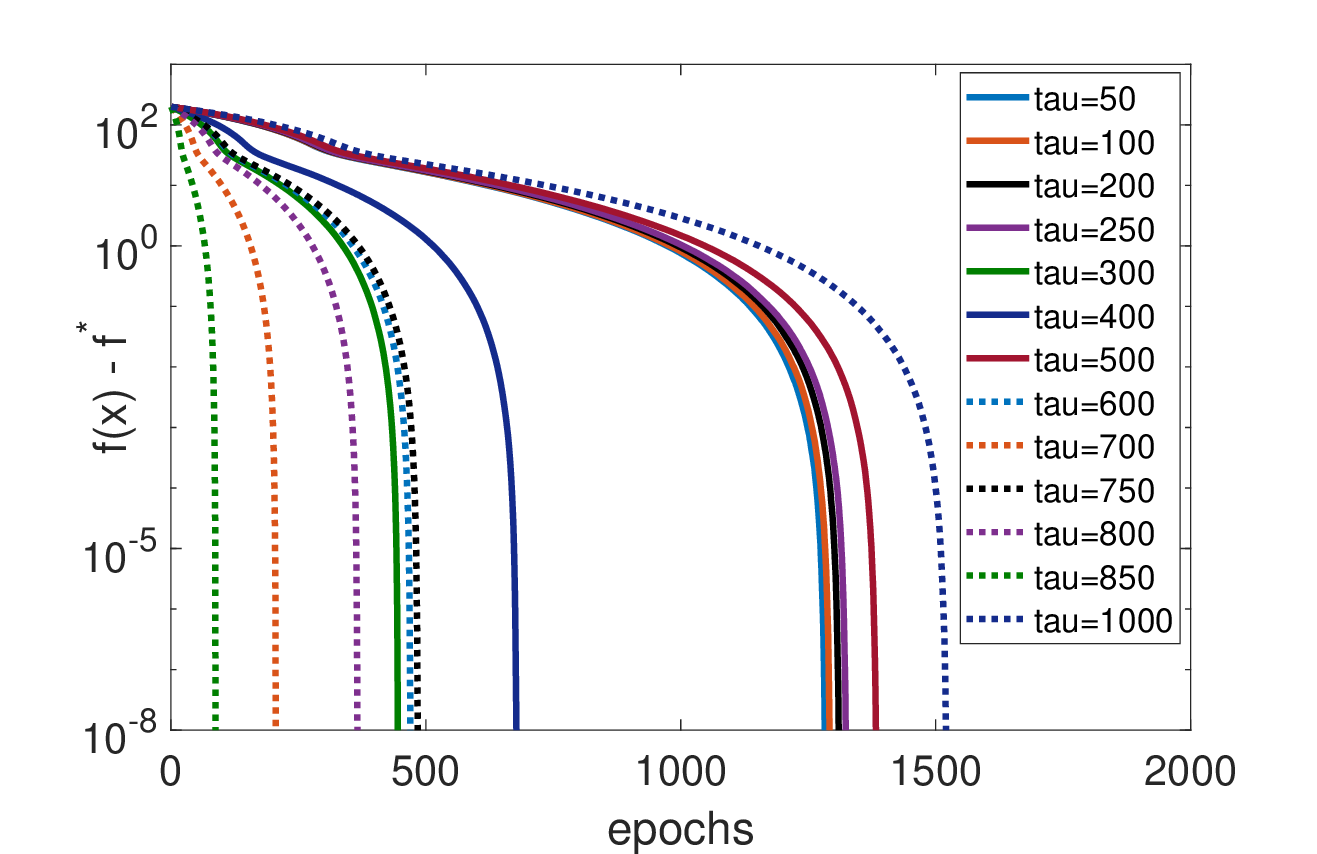} 
	\hspace{-0.5cm}
	\includegraphics[height= 0.18\textheight, width=0.31\textheight]{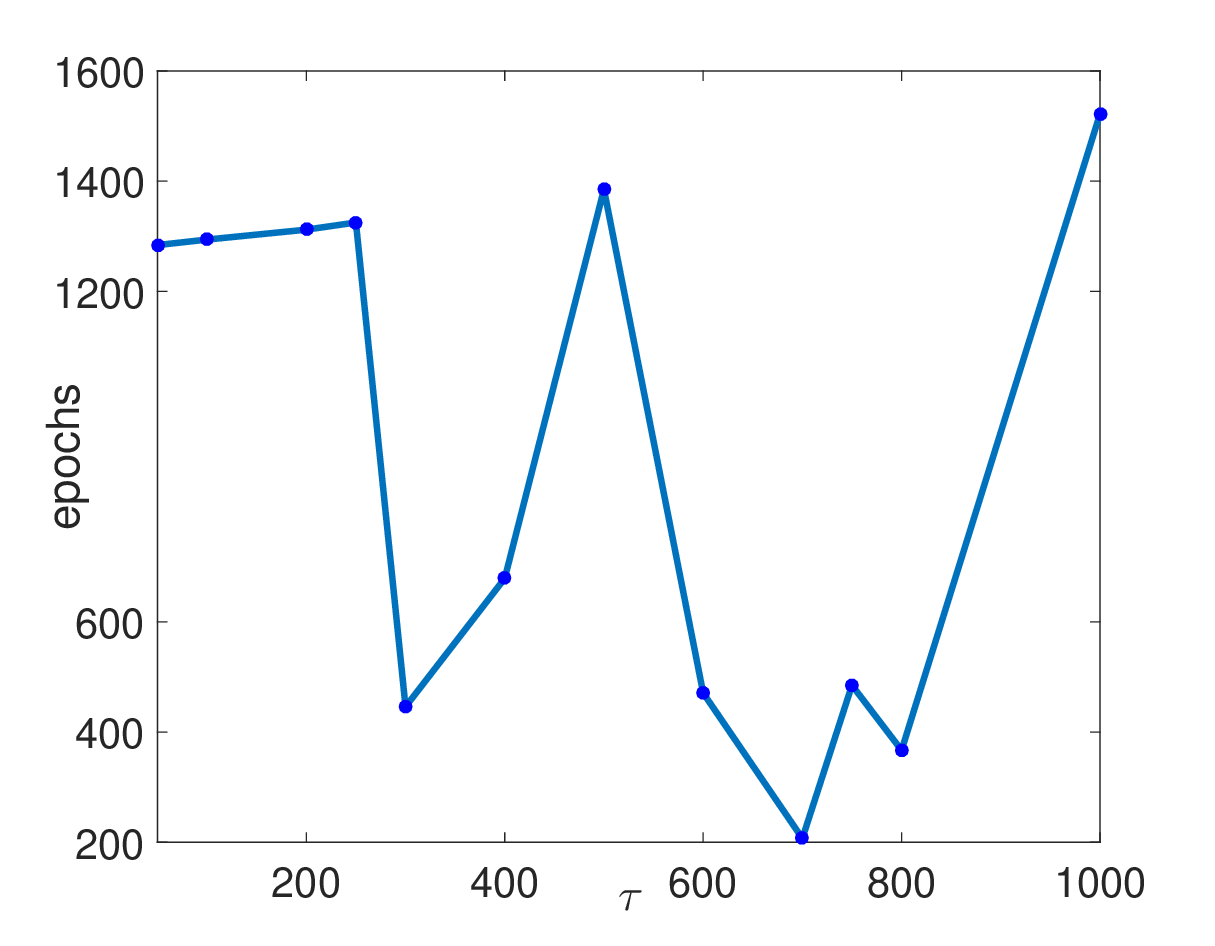}

\vspace{-0.1cm}
	
	\includegraphics[height= 0.18\textheight, width=0.31\textheight]{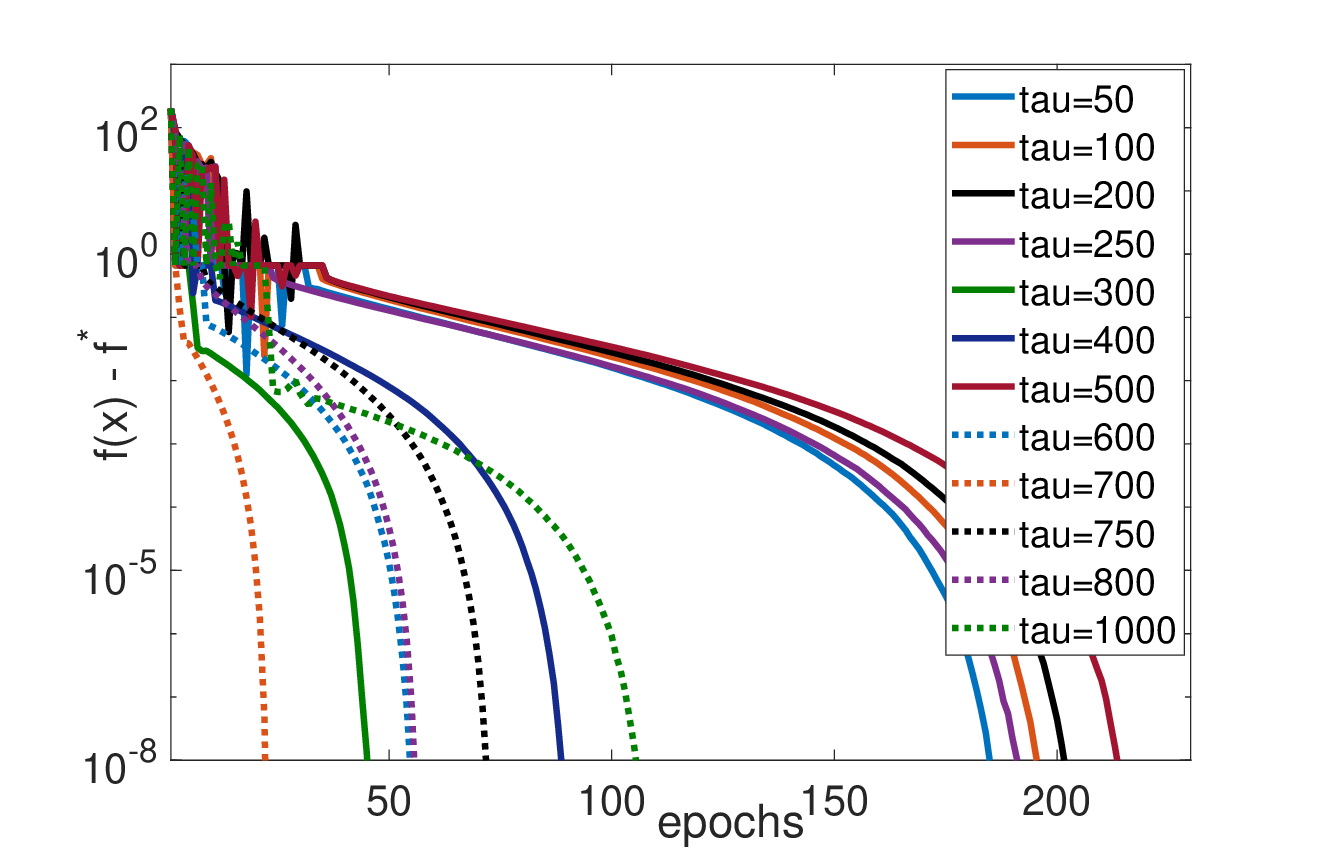}
	\hspace{-0.5cm}
	\includegraphics[height= 0.18\textheight, width=0.31\textheight]{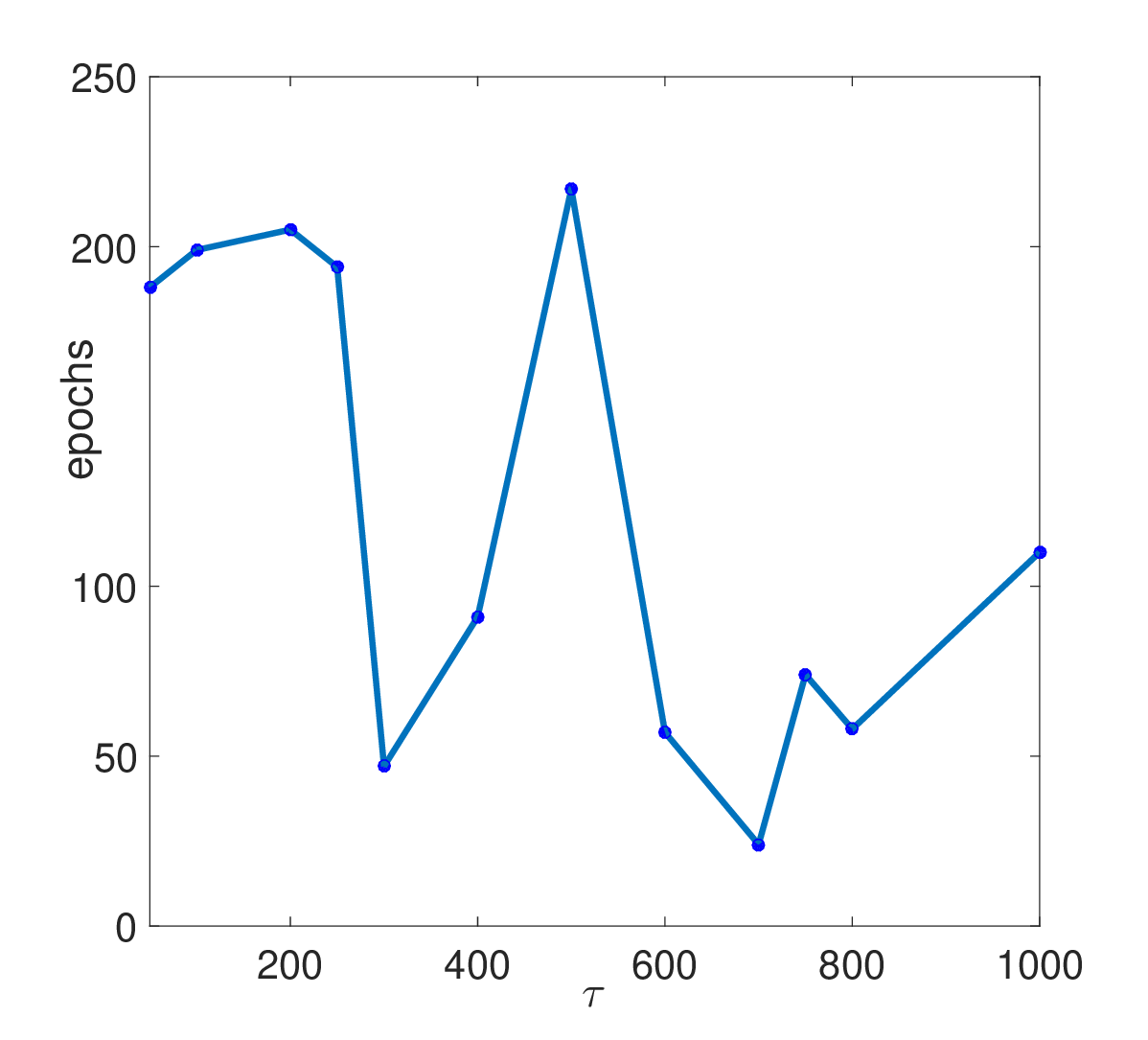}
	\caption{Behavior of SHOM for  different values of minibatch size $\tau$ ranging from $50$ to $1.000$ on  a8a dataset: $p=2$ (top)  and  $p=3$ (bottom).  }
	\label{fig:p2_tau_var}
\end{figure}

\noindent Further, we analyse the behavior of algorithm SHOM for different values of the minibatch size $\tau$ ranging from $50$ to $1.000$ on a8a dataset. The results are given in Figure 2 for $p=2$ (top) and $p=3$ (bottom).  We observe that there are two optimal regimes for the minibatch size $\tau^*$ and they coincide for the two variants of SHOM: $p=2$ and $p=3$, respectively. However, the number of epochs is smaller for $p=3$ than for $p=2$.  One can also observe that  large minibatch sizes  do not necessarily  improve~convergence.


	\subsection{Independent component analysis  problem}
	The independent component analysis (ICA) problem consists of finding the
	sources  $V=\left[\begin{array}{ll}v_{1}, & v_{2}, \ldots ,v_{r}\end{array}\right] \in \mathbb{R}^{r \times N}$ from some observed linear mixed set of signals $\hat{V} = [\hat{v}_1,\,\hat{v}_2,\ldots, \hat{v}_N] \in \mathbb{R}^{m \times N}$, i.e.,   $\hat{V} = AV,$  where $A$ is  the mixing matrix, which is assumed  unknown \cite{HyvKar:01}. The basic assumption is that the signals are statistically independent, so that a distribution of sum of independent signals with arbitrary distributions tends toward a Gaussian distribution. In other words, the purpose of the ICA is to find a demixing matrix $X = A^{\dagger}$ that separates the observed signals into a set of sources which are statistically independent.  Several techniques for measuring independence (non-Gaussianity) were proposed in the literature and posed as the following optimization problem, after some simplifications (see e.g., \cite{HyvKar:01} for more details):
	
	\vspace{-0.5cm}
	
	\begin{equation}
		\label{eq:ica_pb}
	\max_{\|x\| =1} \frac{1}{N}\sum_{i=1:N} \underbrace{\bar{\phi}(x^T\hat{v}_i)}_{=f_i(x)}.
	\end{equation}

\vspace{-0.3cm}

\noindent  One standard measure for independence is the kurtosis (the fourth order moment): $  \bar{\phi}(t) = t^4$. 	Although this measure of non-Gaussianity is theoretically good, it is sensitive to outliers. Therefore, a more practical choice is the approximation of negentropy that uses non-quadratic functions. Two classical choices are \cite{HyvKar:01}:
	$\bar{\phi}(t) =\frac{1}{\alpha} \ln \cosh (\alpha t), \;  \text{with}  \;  1\leq \alpha \leq 2 \quad \text{and} \quad \bar{\phi}(t)=-e^{-\frac{t^{2}}{2}}. $
	Note that  in all these cases  the optimization problem 	\eqref{eq:ica_pb}  is nonconvex, but for $p \in \{1,2\}$ the objective function $\bar \phi$ is  $p$	times differentiable and with the $p$th derivative  Lipschitz  over the bounded feasible  set  $\mathcal{X}=\{x: \|x\| =1 \}$.	We apply ICA framework to reduce the spectral  dimension of a hyperspectral image. A hyperspectral image $\hat{V}$ is a three dimensional hyperspectral data cube $n_1 \times n_2 \times m$, having   $ n_1 \times n_2$ pixels and $m$ is the number of spectral channels.  Denote $N= n_1\cdot n_2$  the number of pixels in each spectral channel.  Then, the hyperspectral image can be represented as a matrix: 
	\vspace*{-0.3cm}
	\begin{align*}
		{\hat{V} = \begin{bmatrix}
			\hat{v}_{11} & \hat{v}_{12} & \cdots &\hat{v}_{1N}   \\
			\cdots&\cdots&\cdots&\cdots \\
			\hat{v}_{m1} & \hat{v}_{m2} & \cdots &\hat{v}_{mN}  
		\end{bmatrix}} \in \mathbb{R}^{m \times N},
	\end{align*}
	where $\hat{v}_{ij}$ is the pixel $j$ on band $i$. We aim at reducing the number of bands in the hypespectral image from $m$ to $r \ll m$ using the negentropy-based  ICA framework,  replacing the expectation in the nonconvex problem 	\eqref{eq:ica_pb} with the empirical risk using the samples from $\hat{V}$. In our tests we use Salinas scene that has the spatial dimension $ 512 \times 217$ (i.e., the number of functions in the finite-sum objective is $N = 111.104$) and $204$ bands, see  \cite{himDB}. We compare SHOM with $p=1, 2$ and $\tau=150$ and the state of the art algorithm FastICA  developed specially for the ICA problem in \cite{HyvKar:01}.  Note that FastICA is a full batch gradient method with a scalar stepsize depending on the second derivatives of $\bar{\phi}$, see \cite{HyvKar:01} for details.  We choose $\bar{\phi}_i(t) = -e^{-t^2/2}$ in  \eqref{eq:ica_pb}, $r=1$, {$L_1^{f_i}= (1+\|\hat{v}_i\|^2) \|\hat{v}_i\|^2 \,  (M_1 \equiv M_1^i = 1.1 \max_{i=1:N}L_1^{f_i}) $   and $ L_{2}^{f_i} = \left(3\left\|\hat{v}_{i}\right\|+\left\|\hat{v}_{i}\right\|^{3}\right)\left\|\hat{v}_{i}\right\|^{3} \, (M_2 \equiv  M_2^i =1.1 \max_{i=1:N}L_2^{f_i} )$, for all $i=1:N$, respectively, and we solve the subproblem   \eqref{sim:eq10} using ADMM described in Section 6.1.}	 From Figure 3 (left) one can observe that SHOM for both  $p=1$ and $2$ is superior to FastICA algorithm, being able to achieve even high accuracy in a very small number of epochs. Finally, Figure 3 (right) display the sample band of Salinas dataset found with  FastICA algorithm  and  SHOM. We observe that the two images, corresponding to a single band, are  similar.        
	\begin{figure}[h] \label{fig:salina}
		\centering
		\includegraphics[height= 0.18\textheight, width=0.31\textheight]{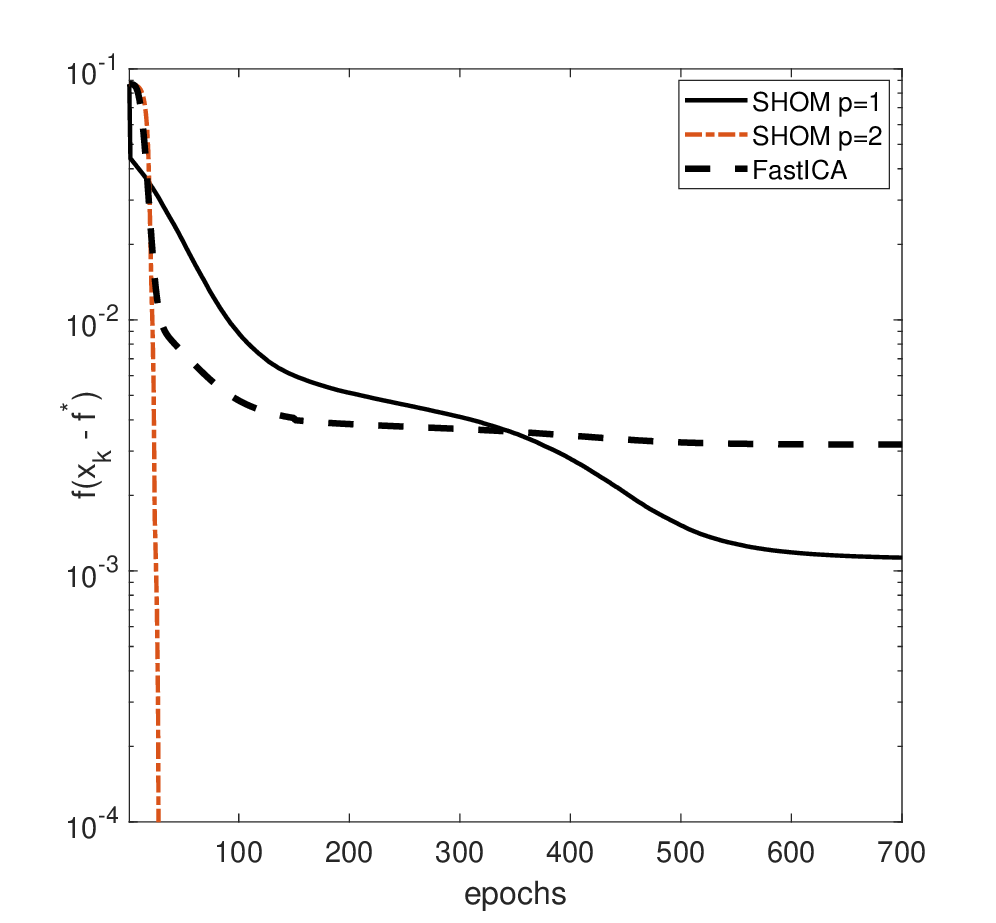}
		\includegraphics[height= 0.18\textheight, width=0.31\textheight]{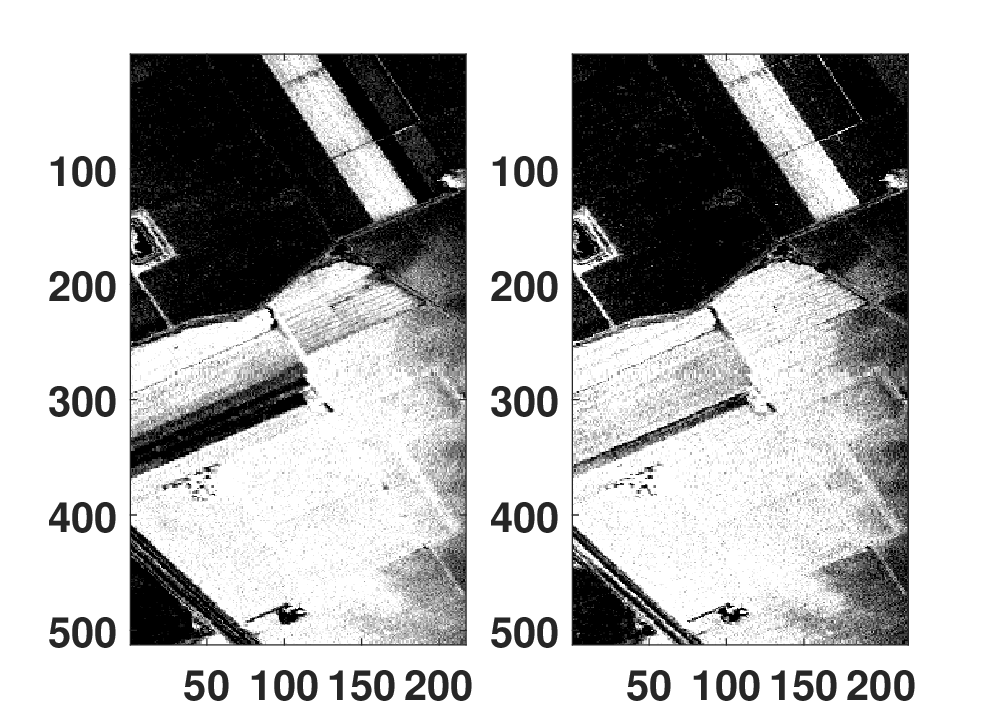}
		\caption{Left: behavior of SHOM and FastICA on ICA problem. Right: sample band of Salinas dataset corresponding to  FastICA and SHOM ($p=2$).}
	\end{figure}

\vspace{-0.3cm}

\section{Conclusions}
\vspace{-0.2cm}
\noindent In this paper, we have designed   a minibatch stochastic higher-order algorithm for minimizing finite-sum  optimization problems. Our method is  based on the minimization of a  higher-order upper bound approximation of the  finite-sum objective function.  We have presented convergence guarantees for nonconvex and convex optimization when the higher-order upper bounds approximate the objective function  up to an error  whose  derivative of a certain order  is Lipschitz continuous. More precisely, we have derived  asymptotic stationary point guarantees for nonconvex problems, and for objective functions  having the KL property or uniformly convex ones we have established local (super\textcolor{black}{/sub})linear  convergence rates.  Moreover,   unlike other higher-order algorithms, our method works with any  batch size.  Numerical simulations have also confirmed the efficiency of our algorithm. 

\vspace{-0.3cm}


\section*{Acknowledgements}
\vspace{-0.3cm}
The research leading to these results has received funding from the NO Grants 2014–2021 RO-NO-2019-0184, under project ELO-Hyp, contract no. 24/2020; UEFISCDI PN-III-P4-PCE-2021-0720, under project L2O-MOC, nr. 70/2022.


\vspace{-0.3cm}

\end{document}